\theoremstyle{theorem}
\newtheorem{thm}{Theorem}
\newtheorem{prethm}{Theorem}
\newtheorem{lem}{Lemma}
\theoremstyle{definition}
\newtheorem{rem}{Remark}
\DeclareMathOperator*{\psum}{{\sum}^\ast}
\DeclareMathOperator*{\dsum}{\sum\sum}
\DeclareMathOperator*{\msum}{\sum\cdots\sum}
\DeclareMathOperator*{\puni}{{\coprod}^\ast}
\newcommand{\ppmod}[1]{\ (\mathrm{mod}\ {#1})}
\newcommand{\M}{\mathfrak{M}}
\newcommand{\m}{\mathfrak{m}}
\newcommand{\Maq}{\mathfrak{M}_{a,q}}
\newcommand{\Naq}{\mathfrak{M}'_{a,q}}
\renewcommand{\phi}{\varphi}
\renewcommand{\epsilon}{\varepsilon}
\renewcommand{\Re}{\mathrm{Re}}
\begin{document}
\title[On prime vs. prime power pairs]
{On prime vs. prime power pairs}

\author[Y. Suzuki]{Yuta Suzuki}

\subjclass[2010]{Primary 11P32, Secondary 11P55}

\keywords{Waring-Goldbach problem; Circle method}

\maketitle

\begin{abstract}
In this paper, we consider pairs of a prime and a prime power
with a fixed difference.
We prove an average result on the distribution of such pairs.
This is a partial improvement of the result of Bauer (1998).
\end{abstract}

\section{Introduction}
In his famous address at the 5th International Congress of Mathematicians,
Landau \cite{Landau} listed four problems in prime number theory,
which are now called Landau's problems.
These problems are:
\begin{enumerate}
\item
Does the function $u^2+1$ represent infinitely many primes for integers $u$?
\item
Does the equation $m=p+p'$ have for any even $m$ a solution in primes?
\item
Does the equation $2=p-p'$ have infinitely many solutions in primes?
\item
Does at least one prime exist between $n^2$ and $(n+1)^2$
for any positive integer $n$?
\end{enumerate}
The present paper is related to the first three problems from Landau's list.

Landau's third problem is well-known as the twin prime problem.
Let
\begin{equation}
\label{twin_prime_counting}
\Psi(X,h)
=
\sum_{n\le X}\Lambda(n)\Lambda(n+h),
\end{equation}
where $h$ is a positive integer and $\Lambda(n)$ is the von Mangoldt function.
This function $\Psi(X,h)$ counts the number of twin prime pairs,
i.e. prime pairs $(p,p')$ satisfying the twin prime equation
\begin{equation}
\label{twin_prime_eq}
p'=p+h,
\end{equation}
which slightly generalizes the twin prime problem.
Although Landau confessed
that his problems seem unattackable at the state of science at his time,
Hardy and Littlewood introduced a new method, which is called now the circle method,
and gave some important attacks against problems on prime numbers.
By applying their method formally,
Hardy and Littlewood found an hypothetical asymptotic formula
\begin{equation}
\label{HL_asymp_twin}
\Psi(X,h)
=
\mathfrak{S}(h)X
+
(\text{Error})
\end{equation}
for even $h$,
where $\mathfrak{S}(h)$ is the singular series for the twin prime problem defined by
\[
\mathfrak{S}(h)
=
\prod_{p|h}\left(1+\frac{1}{p-1}\right)
\prod_{p\nmid h}\left(1-\frac{1}{(p-1)^2}\right).
\]
In this note, we call this type of hypothetical asymptotic formula
\emph{the Hardy-Littlewood asymptotic formula}.
Note that the Bateman-Horn conjecture \cite{Bateman_Horn}
gives a much wider picture on the distribution of prime numbers.
Since $\mathfrak{S}(h)\gg1$,
the Hardy-Littlewood asymptotic formula (\ref{HL_asymp_twin}) gives
a positive answer to the twin prime problem.
Unfortunately,
any rigorous proof of (\ref{HL_asymp_twin}) seems quite far
from our current state of science.
However,
some average behavior of $\Psi(X,h)$
have been obtained by many researchers.
As for the twin prime problem,
Mikawa \cite{Mikawa_twin} or Perelli and Pintz \cite{PP_Goldbach} obtained
the current best result:
\begin{prethm}[{Mikawa \cite{Mikawa_twin}, Perelli and Pintz \cite{PP_Goldbach}}]
\label{MPP_thm}
Let $X,H,A\ge2$, and $\epsilon>0$.
Assume
\[
X^{1/3+\epsilon}\le H\le X.
\]
Then we have
\[
\Psi(X,h)=\mathfrak{S}(h)X+O(XL^{-A})
\]
for all but $\ll HL^{-A}$ even numbers $h\in [1,H]$.
\end{prethm}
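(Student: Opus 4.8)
The plan is to run the Hardy--Littlewood circle method on $\Psi(X,h)$ in its Vinogradov--Vaughan form, and to control the exceptional set by a second-moment (dispersion) argument over $h$. Write $L=\log X$, fix a large constant $B=B(A)$ and put $Q=L^{B}$. Let
\[
S_1(\alpha)=\sum_{n\le X}\Lambda(n)e(n\alpha),\qquad S_2(\alpha)=\sum_{m\le X+H}\Lambda(m)e(m\alpha);
\]
since $n\le X$ and $h\le H$ force $n+h\le X+H$, one has the exact identity $\Psi(X,h)=\int_{0}^{1}S_1(\alpha)\overline{S_2(\alpha)}\,e(h\alpha)\,d\alpha$. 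Dissect the circle into major arcs $\M=\bigcup_{q\le Q}\bigcup_{(a,q)=1}\{\alpha:|\alpha-a/q|\le Q/(qX)\}$ and the complementary minor arcs $\m$; every $\alpha\in\m$ has a Dirichlet approximation $a/q$ with $Q<q\le X/Q$.

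On the major arcs I would invoke the Siegel--Walfisz theorem to replace $S_i(a/q+\beta)$ by $(\mu(q)/\phi(q))v_i(\beta)$, where $v_i$ is the corresponding exponential sum with $\Lambda$ deleted, at a cost of $O(X\exp(-c\sqrt L))$; evaluating the sum over $a$ as a Ramanujan sum and completing both the sum over $q$ and the integral over $\beta$ then gives
\[
\int_{\M}S_1(\alpha)\overline{S_2(\alpha)}\,e(h\alpha)\,d\alpha=\mathfrak S(h)X+O(XL^{-A})
\]
uniformly for $1\le h\le H$, outside a set of size $\ll HL^{-A}$ (those $h$ for which $d(h)$ or $h/\phi(h)$ exceeds a fixed power of $L$), which I add to the exceptional set. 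It then remains to show that the minor-arc integral $I(h):=\int_{\m}S_1(\alpha)\overline{S_2(\alpha)}\,e(h\alpha)\,d\alpha$ obeys $\sum_{h\le H}|I(h)|^{2}\ll HX^{2}L^{-3A}$, for then Chebyshev's inequality bounds the remaining exceptional $h$ by $\ll HL^{-A}$.

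For the minor arcs, expanding the square and summing over $h$ via $\big|\sum_{h\le H}e(h\gamma)\big|\ll\min(H,\|\gamma\|^{-1})$ yields
\[
\sum_{h\le H}|I(h)|^{2}\ll\int_{\m}\int_{\m}|S_1S_2|(\alpha)\,|S_1S_2|(\beta)\,\min\!\big(H,\|\alpha-\beta\|^{-1}\big)\,d\alpha\,d\beta .
\]
I would split this into the diagonal range $\|\alpha-\beta\|\le 1/H$ and a dyadic decomposition $\|\alpha-\beta\|\asymp\delta$, $1/H<\delta\le 1/2$, of the remaining range, which in each case reduces matters to a localized fourth-moment quantity of the shape $\int_{\m}|S_1S_2|(\alpha)\big(\int_{\|\beta-\alpha\|\le\delta}|S_1S_2|(\beta)\,d\beta\big)\,d\alpha$ with $\delta\ge 1/H$. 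The crude pointwise bound $|S_i(\alpha)|\ll XL^{-B/2+O(1)}$ on $\m$ together with $\int_0^1|S_i|^2\ll XL$ saves only powers of $L$, whereas one needs a saving of a power of $X$; to get it I would feed in Vaughan's identity, writing each $S_i$ as a bounded combination of Type~I sums $\sum_{d\le U}a_d\sum_m e(dm\alpha)$ and Type~II bilinear sums $\sum_{U<d\le X/U}\sum_m b_d c_m e(dm\alpha)$ with $U$ of size about $X^{1/3}$. The Type~I sums are disposed of by summing the inner geometric progression; for the Type~II sums one applies Cauchy--Schwarz to separate the two variables and then a large-sieve/mean-value inequality over the well-spaced denominators $Q<q\le X/Q$, the averaging over $h$ — i.e.\ the lower bound $\delta\ge 1/H$ on the localization scale — supplying the room that makes this estimate succeed.

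The hard part is precisely this Type~II bilinear estimate on the minor arcs, which is also where the hypothesis $H\ge X^{1/3+\epsilon}$ enters: one must bound a localized mean value of $\big|\sum_{d\sim D}\sum_{m\sim M}b_d c_m e(dm\alpha)\big|^{2}$, weighted by $|S_1S_2|$ and restricted to $\|\alpha-\beta\|\le\delta$, by $\ll\delta X^{2}L^{-C}$ for large $C=C(A)$, uniformly over all factorisations $DM\asymp X$ with $U<D<X/U$. Balancing the loss in Cauchy--Schwarz against the gain from the large sieve forces $\delta\gtrsim X^{-1/3}$, i.e.\ $H\ge X^{1/3+\epsilon}$; getting below this threshold would require Kloosterman-sum input of Deshouillers--Iwaniec type. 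An alternative implementation replaces Vaughan's identity throughout by the explicit formula in Dirichlet characters together with zero-density estimates for $L(s,\chi)$ and the large sieve, but it runs into the same $X^{1/3}$ barrier at the corresponding step.
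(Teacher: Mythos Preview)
The paper does not prove this theorem. It is quoted in the introduction as a known result of Mikawa and of Perelli--Pintz, and no argument for it appears anywhere in the text. The only place the paper brushes against its content is Theorem~\ref{90_moment}, where the underlying minor-arc second moment
\[
\sum_{U<h\le U+V}\left|\int_\m|S_1(\alpha)|^2 e(h\alpha)\,d\alpha\right|^2\ll VX^2L^{-C}
\]
is recorded, again without proof and with a pointer back to \cite{Mikawa_twin,PP_Goldbach}. So there is no ``paper's own proof'' to compare your proposal against; Theorem~\ref{MPP_thm} is used as a black box.

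As a standalone sketch of the cited result your outline is broadly in the right spirit, but two points deserve correction. First, the route you describe (Vaughan's identity, Type~I/II splitting, large sieve on the bilinear piece) is closer to Mikawa's argument; Perelli and Pintz do not use Vaughan's identity at all but work entirely through the explicit formula and zero-density estimates for Dirichlet $L$-functions, which you relegate to an ``alternative implementation''. Second, your major-arc step introduces an unnecessary exceptional set: the asymptotic $\int_\M S_1\overline{S_2}\,e(h\alpha)\,d\alpha=\mathfrak S(h)X+O(XL^{-A})$ holds uniformly for all even $h\le H$, since $\mathfrak S(h)\ll\prod_{p\mid h}(1+1/(p-1))\ll L$ pointwise; there is no need to discard $h$ with large $d(h)$ or $h/\phi(h)$. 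The genuine exceptional set arises only from the minor-arc second moment, exactly as you say.
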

\noindent
Since the original twin prime problem is the case $h=2$,
we are interested in restricting $h$ to some small neighborhood of $h=2$.
Namely, our goal is to obtain the result under the situation
``the larger $X$ with the smaller $h$''.
In this note, we consider this kind of average results
for the Hardy-Littlewood asymptotic formulas.

We next consider Landau's first problem.
Let
\begin{equation}
\label{HL_counting}
\Psi_k(X,h)=\sum_{n^k\le X}\Lambda(n^k+h),
\end{equation}
where $k\ge2$ is a positive integer.
This function counts the number of pairs $(n^k, p)$
satisfying the equation
\begin{equation}
\label{HL_eq}
p=n^k+h,
\end{equation}
which generalizes Landau's first problem.
Note that if the polynomial $X^k+h\in\mathbb{Q}[X]$ is reducible,
then the equation (\ref{HL_eq}) has only a finite number of solutions.
Thus we introduce
\[
\mathbf{Irr}_k=
\Set{h\in\mathbb{N}|\text{$X^k+h$ is irreducible over $\mathbb{Q}$}}.
\]
As for this equation,
the Hardy-Littlewood asymptotic formula is given by
\begin{equation}
\label{HL_asymp_HL}
\Psi_k(X,h)
=
\mathfrak{S}_k(h)X^{1/k}
+
(\text{Error})
\end{equation}
for $h\in\mathbf{Irr}_k$,
where the singular series $\mathfrak{S}_k(h)$ is given by
\[
\mathfrak{S}_k(h)
=
\prod_p\left(1-\frac{r_k(h,p)-1}{p-1}\right),
\]
\[
r_k(h,p)=
\left|
\Set{x\ppmod{p}|x^k+h\equiv0\ppmod{p}}
\right|.
\]
The average result for this problem is obtained recently
by \cite{Baier_Zhao_1, Baier_Zhao_2, Foo_Zhao}.
We note that as for the ``conjugate'' equation
\[
N=p+n^k,
\]
some results were obtained earlier
by \cite{Mikawa_square, PP_square, Perelli_Zaccagnini},
and it seems straightforward to apply these earlier work
to the function $\Psi_k(X,h)$
and give the same result as in \cite{Baier_Zhao_2}
or even better results than those of \cite{Baier_Zhao_1, Foo_Zhao}.
We have to mention that
the interesting method used in \cite{Baier_Zhao_2}
is completely different from the earlier work.
Namely, Baier and Zhao showed that Linnik's dispersion method is sometimes applicable
to our problem, which is originally attacked by the circle method in earlier work.
As a result of these work, the current best result is:
\begin{prethm}[Perelli and Zaccagnini \cite{Perelli_Zaccagnini}]
\label{Perelli_Zaccagnini}
Let $X,H,A\ge2$, and $\epsilon>0$.
Assume
\[
X^{1-1/k+\epsilon}\le H\le X.
\]
Then we have
\[
\Psi_k(X,h)=\mathfrak{S}_k(h)X^{1/k}+O(X^{1/k}L^{-A})
\]
for all but $\ll HL^{-A}$ integers $h\in [1,H]\cap\mathbf{Irr}_k$.
\end{prethm}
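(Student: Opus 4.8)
The plan is to use the circle method together with a mean-square estimate in the shift $h$. Put $P=X^{1/k}$, $N=X+H$, $L=\log X$, and
\[
S(\alpha)=\sum_{n\le N}\Lambda(n)e(n\alpha),\qquad T(\alpha)=\sum_{a\le P}e(a^{k}\alpha),
\]
so that $\Psi_k(X,h)=\int_0^1 S(\alpha)T(-\alpha)e(-h\alpha)\,d\alpha$ for each $1\le h\le H$. The integers $h\in[1,H]\setminus\mathbf{Irr}_k$ are exactly those for which $h$ has a restricted perfect-power shape; there are $\ll H^{1/2}$ of them and they can be absorbed into the exceptional set at once. I would then fix a parameter $Q$, a (small) power of $X$, and split $[0,1]=\mathfrak{M}\cup\mathfrak{m}$ along the Farey dissection of order $Q$: $\mathfrak{M}$ is the union of the arcs $|\alpha-a/q|\le Q(qN)^{-1}$ with $q\le Q$ and $(a,q)=1$, and $\mathfrak{m}$ is the complement. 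Writing $\Psi_k(X,h)=\Psi_k^{\dagger}(X,h)+E(h)$ for the corresponding pieces, it suffices to show that $\sum_{h\le H}|\Psi_k^{\dagger}(X,h)-\mathfrak{S}_k(h)X^{1/k}|^{2}$ and $\sum_{h\le H}|E(h)|^{2}$ are each $\ll HX^{2/k}L^{-A}$; Chebyshev's inequality then yields the exceptional-set bound $\ll HL^{-A}$ after renaming $A$.

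For the major arcs I would replace $S(a/q+\beta)$ by $\phi(q)^{-1}\mu(q)\sum_{n\le N}e(n\beta)$, controlling the error through the nontrivial zeros of the Dirichlet $L$-functions of moduli $q\le Q$ (the density method), and replace $T(-a/q-\beta)$ by $q^{-1}S(q,-a)v(-\beta)$, where $S(q,a)$ is the complete Gauss sum and $v(\beta)=\int_0^{P}e(\beta t^{k})\,dt$. Integrating over $\beta$ and summing over $a$ and $q$ produces the main term $\mathfrak{S}_k(h)X^{1/k}$: the $q$-sum converges to the singular series with a tail that is negligible since $Q$ is a power of $X$ (here one uses equidistribution of the roots of $X^{k}+h$ modulo primes, i.e. the Chebotarev density theorem, to sum the local factors). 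The contribution of the zeros, integrated against $T$ and then summed over $h\le H$, is estimated by a Montgomery--Huxley type zero-density bound; this gives $\sum_{h\le H}|\Psi_k^{\dagger}(X,h)-\mathfrak{S}_k(h)X^{1/k}|^{2}\ll HX^{2/k}L^{-A}$, provided $Q\le X^{\theta}$ for a suitable fixed $\theta$ and provided $H$ is not too small relative to $Q$.

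For the minor arcs I would estimate $\sum_{h\le H}|E(h)|^{2}$, and it is here that the short range of $h$ must be exploited. By Gallagher's lemma, with $F=S\cdot T(-\,\cdot\,)\cdot\mathbf{1}_{\mathfrak m}$,
\[
\sum_{h\le H}|E(h)|^{2}\ll H^{2}\int_0^1\Bigl|\int_{\alpha}^{\alpha+1/H}F(\beta)\,d\beta\Bigr|^{2}d\alpha,
\]
which reduces matters to averages of $S(\beta)T(-\beta)$ over windows of length $1/H$. Expanding the square and carrying out the window-integration \emph{exactly} — rather than by Cauchy--Schwarz, which would only return Bessel's inequality $\sum_{h\le H}|E(h)|^{2}\le\int_{\mathfrak m}|S|^{2}|T|^{2}$ and lose a factor $X/H$ — forces the prime argument to lie within $O(H)$ of a perfect $k$-th power, in effect shortening the Weyl sum $T$. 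One then closes the estimate with $\int_0^1|S|^{2}\ll N\log N$, the pointwise bound $\sup_{\mathfrak m}|S|\ll X^{1-\theta/2+\epsilon}$ (from Vaughan's identity, since $q>Q=X^{\theta}$ on $\mathfrak m$), and Weyl's inequality for $T$ when $k=2$, or Vinogradov-type bounds for the $k$-th power Weyl sum when $k\ge3$ (used, if necessary, in Hua's integrated form $\int_0^1|T(\alpha)|^{2s}\,d\alpha\ll P^{2s-k+\epsilon}$). Carrying this through yields $\sum_{h\le H}|E(h)|^{2}\ll HX^{2/k}L^{-A}$, and the admissible ranges of $Q$ and of the window length $1/H$ combine to force precisely $H\ge X^{1-1/k+\epsilon}$.

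The step I expect to be the main obstacle is exactly this joint calibration. The cutoff $Q$ must be a genuine power of $X$ — large enough that $T$ is suitably small on $\mathfrak{m}$ and that the $h$-averaged minor-arc variance beats $HX^{2/k}L^{-A}$ — yet small enough that the density of zeros of the relevant Dirichlet $L$-functions near the line $\mathrm{Re}\,s=1$, which governs the major-arc error after averaging, stays under control; these opposing requirements, once passed through the Gallagher localization to windows of length $1/H$, turn out to be compatible exactly at the threshold $H=X^{1-1/k+\epsilon}$. For $k\ge3$ there is the further difficulty that the relevant minor-arc behaviour of the higher-degree Weyl sum falls short of square-root cancellation, so extra care (mean-value/Hua estimates in place of a pointwise bound, and a careful subdivision of $\mathfrak m$ by the size of $q$) is needed to keep the numerology closed.
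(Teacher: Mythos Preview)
This theorem is \emph{cited} in the paper, not proved there; it is attributed to Perelli--Zaccagnini and stated as background. The paper does remark that its own method, developed for the harder equation $p'=p^k+h$, ``gives a minor variant of the proof of Theorem~\ref{Perelli_Zaccagnini}'', so the natural comparison is between your outline and that method (Sections~5--7, with the weight $\Lambda$ on the inner sum dropped).

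Your outline and the paper's method diverge in two substantive ways. First, you take the Farey cutoff $Q$ to be a fixed power of $X$ and treat the major arcs via zero-density estimates; the paper instead takes $P=L^{B}$ and needs only Siegel--Walfisz (Lemma~\ref{S_approx}), which is much simpler and costs nothing in the final range of $H$. Second --- and this is the real point --- your minor-arc step is not the paper's. After the Fej\'er/Gallagher localization you propose to close up using the Vinogradov pointwise bound $\sup_{\mathfrak m}|S|\ll X^{1-\theta/2+\epsilon}$ together with Weyl's inequality or Hua's lemma for $T$. The paper does something structurally different: it packages the localized $k$-th-power contribution into the kernel $\Phi(\alpha)$ of Section~\ref{lemma_minor}, applies Weyl differencing to $\Phi$ (Lemma~\ref{Phi_Weyl}) so that only a \emph{linear} phase in $h$ survives, and then feeds the result into the Mikawa/Perelli--Pintz twin-prime minor-arc estimate (Theorem~\ref{90_moment}). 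It is precisely this reduction to the linear problem --- not a direct Weyl/Hua bound on the length-$X^{1/k}$ sum $T$ --- that makes the threshold come out as $H\ge X^{1-1/k+\epsilon}$.

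Your sketch \emph{asserts} that the same threshold emerges from the joint calibration of $Q$ and $1/H$, but you do not show the numerology, and as written it is doubtful: a pointwise saving of $X^{\theta/2}$ on $S$ must be repaid by a matching power saving on $T$, and Weyl/Hua on the full sum $T$ saves only about $X^{1/(kK)}$ with $K=2^{k-1}$, which for $k\ge3$ is far too little. Your remark that the window-integration ``in effect shortens the Weyl sum $T$'' gestures at the right phenomenon, but if you make it precise you will find yourself at the kernel $\Phi$ and will need the Weyl differencing of Lemma~\ref{Phi_Weyl} anyway. In short: the high-level plan is sound, but the minor-arc mechanism you name is not the one that actually delivers $X^{1-1/k+\epsilon}$; the paper's linearize-and-reduce-to-twin-primes step (or the analogous device in the original Perelli--Zaccagnini argument) is the missing idea.
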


In this paper,
we consider a kind of mixture of the above two problems.
Namely,
we consider the ``prime vs. prime power'' pairs $(p^k,p')$ satisfying the equation
\begin{equation}
\label{Hua_eq}
p'=p^k+h
\end{equation}
which can be regarded as
a mixture of equations (\ref{twin_prime_eq}) and (\ref{HL_eq}).
We introduce the sets
\[
\mathbb{H}_k^\mathrm{local}=
\Set{h\in\mathbb{N}|
\forall p\text{\,:\,prime},\,
(p-1)|k\Rightarrow h\not\equiv-1\ppmod{p}},\]
\[
\mathbb{H}_k=
\mathbb{H}_k^\mathrm{local}\cap\mathbf{Irr}_k.
\]
As for this equation (\ref{Hua_eq}),
the counting function is given by
\[
\Psi_k^\ast(X,h)=
\sum_{n^k\le X}\Lambda(n)\Lambda(n^k+h),
\]
and the Hardy-Littlewood asymptotic formula takes the form
\begin{equation}
\label{HL_asymp_Hua}
\Psi_k^\ast(X,h)
=
\mathfrak{S}_k^\ast(h)X^{1/k}
+
(\text{Error})
\end{equation}
for $h\in\mathbb{H}_k$,
where
\[
\mathfrak{S}_k^\ast(h)
=
\prod_{p|h}\left(1+\frac{1}{p-1}\right)
\prod_{p\nmid h}\left(1-\frac{(w_k(h,p)-1)p+1}{(p-1)^2}\right),
\]
\begin{equation}
\label{w_def}
w_k(h,p)
=
\left|
\Set{x\ppmod{p}|x^k+h\equiv0\ppmod{p},\ (x,p)=1}
\right|.
\end{equation}
As for the equation (\ref{Hua_eq}),
Liu and Zhan \cite{Liu_Zhan} obtained a result for the case $k=2$,
and Bauer \cite{Bauer} generalized their result to general $k$:
\begin{prethm}[Bauer \cite{Bauer}]
\label{Bauer_full}
Let $X,H,A\ge2$, and $\epsilon>0$.
Assume
\[
X^{1-1/2k+\epsilon}\le H\le X.
\]
Then we have
\[
\Psi_k^\ast(X,h)=\mathfrak{S}_k^\ast(h)X^{1/k}+O(X^{1/k}L^{-A})
\]
for all but $\ll HL^{-A}$ integers $h\in [1,H]\cap\mathbb{H}_k$.
\end{prethm}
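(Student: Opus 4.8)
We argue by the Hardy--Littlewood circle method and reduce the statement to a mean-square estimate over $h$. Put $L=\log X$, $P=X^{1/k}$, $X'=X+H$, and set
\[
f(\alpha)=\sum_{n\le P}\Lambda(n)\,e(n^k\alpha),\qquad g(\alpha)=\sum_{m\le X'}\Lambda(m)\,e(m\alpha),
\]
so that by orthogonality $\Psi_k^\ast(X,h)=\int_0^1 f(\alpha)\overline{g(\alpha)}\,e(h\alpha)\,d\alpha$ for every $h\le H$. We read the Euler product $\mathfrak{S}_k^\ast(h)$ as $0$ when $h\notin\mathbb{H}_k^{\mathrm{local}}$, and we discard at the outset the $O(\sqrt H)$ integers $h\le H$ for which $X^k+h$ is reducible. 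Then by Chebyshev's inequality and a dyadic decomposition it suffices to prove
\[
\mathcal{S}(H):=\sum_{H<h\le 2H}\bigl|\Psi_k^\ast(X,h)-\mathfrak{S}_k^\ast(h)\,X^{1/k}\bigr|^2\ \ll\ H\,X^{2/k}L^{-3A}
\]
for every $H$ with $X^{1-1/2k+\epsilon}\le H\le X$, the contribution of $h\le X^{1-1/2k+\epsilon}$ to the exceptional set being trivially $\ll HL^{-A}$. Dissect $[0,1]$ into major arcs $\mathfrak{M}=\bigcup_{q\le Q}\bigcup_{(a,q)=1}\{\,|\alpha-a/q|\le QX^{-1}\,\}$ with $Q=L^B$ for a large constant $B=B(A)$, and minor arcs $\mathfrak{m}=[0,1]\setminus\mathfrak{M}$.

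On the major arcs I would replace $f$ and $g$ by their main terms via the Siegel--Walfisz theorem: for $\alpha=a/q+\beta\in\mathfrak{M}$,
\[
f(\alpha)=\frac{S_k(a,q)}{\phi(q)}\,v_k(\beta)+O(PL^{-C}),\qquad g(\alpha)=\frac{\mu(q)}{\phi(q)}\,u(\beta)+O(XL^{-C}),
\]
where $S_k(a,q)=\sum_{(b,q)=1}e(ab^k/q)$, $v_k(\beta)=\sum_{n\le P}e(n^k\beta)$ and $u(\beta)=\sum_{m\le X'}e(m\beta)$. Substituting, completing the arithmetic sum over $q$ and the $\beta$-integral to $\mathbb{Z}$ and $\mathbb{R}$ respectively, and estimating the tails, one obtains
\[
\int_{\mathfrak{M}}f(\alpha)\overline{g(\alpha)}\,e(h\alpha)\,d\alpha=\mathfrak{S}_k^\ast(h)\,X^{1/k}+O(X^{1/k}L^{-3A})
\]
uniformly for $h\le H$. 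The identification of the completed arithmetic factor with the Euler product $\mathfrak{S}_k^\ast(h)$ is a routine local computation in terms of the quantities $w_k(h,p)$ from \eqref{w_def}, and the hypotheses $h\in\mathbb{H}_k^{\mathrm{local}}$ and $h\in\mathbf{Irr}_k$ are precisely what keeps $\mathfrak{S}_k^\ast(h)>0$, by forbidding $w_k(h,p)=p-1$, and keeps the problem non-degenerate. This step is standard and contributes only $\ll HX^{2/k}L^{-6A}$ to $\mathcal{S}(H)$, so everything reduces to the minor-arc bound $\sum_{H<h\le 2H}\bigl|\int_{\mathfrak{m}}f\overline{g}\,e(h\alpha)\,d\alpha\bigr|^2\ll HX^{2/k}L^{-3A}$.

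The crux is to exploit the average over $h$. Writing the inner integral as $\sum_{m\le X'}\Lambda(m)\,c_{m-h}$, where $c_r=\int_{\mathfrak{m}}f(\alpha)e(-r\alpha)\,d\alpha$ are the Fourier coefficients of $f\,\mathbf{1}_{\mathfrak{m}}$, squaring and summing over $h$ produces a diagonal term $\sum_m\Lambda(m)^2\sum_{H<h\le 2H}|c_{m-h}|^2$, which after reversing the order of summation is $\ll L^2H\int_{\mathfrak{m}}|f|^2\ll HPL^3$ by Bessel's inequality and hence is admissible since $P=X^{1/k}$ dominates any power of $L$; and an off-diagonal term
\[
\sum_{0<|d|\le X'}\ \sum_{r}c_r\,\overline{c_{r+d}}\!\!\sum_{r+H<m\le r+2H}\!\!\Lambda(m)\Lambda(m+d),
\]
in which the autocorrelation $\sum_r c_r\,\overline{c_{r+d}}$ is the Fourier coefficient $\int_{\mathfrak{m}}|f(\alpha)|^2e(d\alpha)\,d\alpha$ of $|f\,\mathbf{1}_{\mathfrak{m}}|^2$, and the inner sum over a window of length $\asymp H$ is controlled by Selberg's sieve, which bounds it by $\ll\mathfrak{S}(d)H$ with $\mathfrak{S}$ the twin-prime singular series. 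After writing the window sum as $\mathfrak{S}(d)H$ plus an error term absorbed by a further mean-value estimate, one is left to bound $H\sum_{0<|d|\le X'}\mathfrak{S}(d)\int_{\mathfrak{m}}|f(\alpha)|^2e(d\alpha)\,d\alpha=H\int_{\mathfrak{m}}|f(\alpha)|^2\bigl(\sum_{0<|d|\le X'}\mathfrak{S}(d)e(d\alpha)\bigr)\,d\alpha$, and here one dissects $\mathfrak{m}$ by the size of the denominator $q$ ($Q<q\le X/Q$) and uses, on the one hand, the Weyl/Vinogradov estimate for the degree-$k$ prime Weyl sum $f$ together with a fourth-moment (Hua-type) bound for $\int_{\mathfrak{m}}|f|^4$, and on the other hand the large-sieve and Huxley-type mean-value estimates that control the singular-series polynomial $\sum_d\mathfrak{S}(d)e(d\alpha)$ on the pieces where $f$ has no saving, in particular for $q$ close to $P^k$. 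The bookkeeping is arranged so that the several losses balance exactly when $H\ge X^{1-1/2k+\epsilon}$: this is the threshold at which the available saving for the prime Weyl sum, fed through the fourth moment and weighed against the target size $HX^{2/k}$, first becomes admissible, specialising for $k=1$ to the classical exponent $1/2$.

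I expect the minor-arc estimate to be the main obstacle, and within it the handling of the various ranges of $q$ --- especially the large denominators near $P^k$, where $f$ has only the trivial bound and one must lean entirely on the cancellation and mean values of the linear prime sum, and the intermediate band where neither $f$ nor the auxiliary singular-series polynomial has a satisfactory pointwise bound --- to be the genuinely hard part, since there the fourth moment of the prime Weyl sum, the large-sieve input, and the averaging over $h$ must all be run simultaneously with every power of $L$ kept under control; for $k\ge 3$ this is likely to require a further refinement, such as a recursive pruning of the major arcs or the use of higher moments of $f$, to compensate for the weakness of the Weyl saving. A subsidiary, purely technical, point is the bookkeeping in the major-arc completion needed to produce $\mathfrak{S}_k^\ast(h)$ in exactly the shape of \eqref{HL_asymp_Hua}, in particular the treatment of the primes $p\mid h$ and the passage from the Gauss-type sum $S_k(a,q)$ to the local counts $w_k(h,p)$, which must be carried out with the congruence conditions defining $\mathbb{H}_k$ in hand.
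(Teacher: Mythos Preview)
Theorem~C is not proved in this paper; it is quoted from Bauer as the prior state of the art, and the paper's own contribution is the stronger Theorem~\ref{main_thm} with range $H\ge X^{1-1/k+\epsilon}$. So the only ``paper's proof'' available for comparison is the argument for Theorem~\ref{main_thm}, which of course implies Theorem~C.

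On the minor arcs your route and the paper's are essentially opposite. You open up the \emph{linear} sum $g$, which leaves the degree-$k$ object $\int_\m|f|^2e(d\alpha)\,d\alpha$ to be handled by Weyl/Vinogradov bounds and a Hua-type fourth moment for $f$; this is in the spirit of Bauer's original argument and naturally produces the exponent $1-1/2k$. The paper instead opens up $S_k$: after inserting a Fej\'er weight and applying AM--GM it arrives at the kernel $\Phi(\alpha)=\sum w(m_1^k-m_2^k)e((m_1^k-m_2^k)\alpha)$, applies Weyl differencing to $\Phi$ itself (Lemma~\ref{Phi_Weyl}) to linearize it, and thereby reduces the entire minor-arc problem to the \emph{linear} mean square $\sum_h\bigl|\int_\m|S_1|^2e(h\alpha)\,d\alpha\bigr|^2$ of Mikawa and Perelli--Pintz (Theorem~\ref{90_moment}). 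Moving the Weyl differencing from the prime sum $f$ to the unweighted kernel $\Phi$ is precisely what buys the improved exponent $1-1/k$; the price, as the paper notes in Remark~\ref{disadvantage}, is incompatibility with the short-interval restriction trick that Bauer's method permits.

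Your off-diagonal step also contains a genuine gap. You display
\[
\sum_{d}\ \sum_{r} c_r\,\overline{c_{r+d}}\sum_{r+H<m\le r+2H}\Lambda(m)\Lambda(m+d)
\]
and then treat $\sum_r c_r\overline{c_{r+d}}$ as a standalone factor equal to $\int_\m|f|^2e(d\alpha)\,d\alpha$. But the inner prime correlation depends on $r$, so the $r$-sum and the $m$-sum do not decouple. If you replace the window sum by $\mathfrak{S}(d)H$ via the sieve you get only an upper bound, which forces absolute values onto $\int_\m|f|^2e(d\alpha)\,d\alpha$ and kills the singular-series-polynomial cancellation you invoke afterwards; if instead you replace it by an \emph{asymptotic} $\mathfrak{S}(d)H+E(r,d)$, then the remainder $\sum_d\sum_r c_r\overline{c_{r+d}}\,E(r,d)$ still carries the coupling, and controlling $E(r,d)$ pointwise in $(r,d)$ is essentially the twin-prime problem in short intervals. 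Your phrase ``absorbed by a further mean-value estimate'' hides exactly this difficulty: one would need a bound on $E(r,d)$ on average over $d$ that is \emph{uniform} in the window position $r$, and your sketch does not supply a mechanism for this.
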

\noindent
We remark that
the results in \cite{Bauer, Liu_Zhan} are stated with the conjugate equation
\begin{equation}
\label{conjugate_Hua}
N=p^k+p'.
\end{equation}
The aim of this paper is to improve this result of Bauer.
In particular, we have
\begin{thm}
\label{main_thm}
Let $X,H,A\ge2$, and $\epsilon>0$.
Assume
\[
X^{1-1/k+\epsilon}\le H\le X.
\]
Then we have
\[
\Psi_k^\ast(X,h)=\mathfrak{S}_k^\ast(h)X^{1/k}+O(X^{1/k}L^{-A})
\]
for all but $\ll HL^{-A}$ integers $h\in [1,H]\cap\mathbb{H}_k$.
\end{thm}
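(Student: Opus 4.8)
The plan is a circle-method argument with an $L^{2}$-average over the shift $h$. Write $L=\log X$, $P=X^{1/k}$, $e(\theta)=e^{2\pi i\theta}$, and set
\[
S(\alpha)=\sum_{n\le P}\Lambda(n)e(-\alpha n^{k}),\qquad T(\alpha)=\sum_{m\le X+H}\Lambda(m)e(\alpha m),
\]
so that $\Psi_k^\ast(X,h)=\int_0^1 S(\alpha)T(\alpha)e(-\alpha h)\,d\alpha$ for $1\le h\le H$. Fix $B=B(A)$ large, put $Q_0=L^{B}$, and take the Farey dissection of $[0,1]$ at level $Q=X/Q_0$: the major arcs $\M$ are the intervals $|\alpha-a/q|\le 1/(qQ)$ with $(a,q)=1$, $q\le Q_0$, and $\m=[0,1]\setminus\M$, so $\Psi_k^\ast(X,h)=\int_{\M}+\int_{\m}$.

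On $\M$ I would invoke the Siegel--Walfisz theorem to replace $S$ and $T$ by their standard main-term approximations: for $q\le Q_0$,
\[
T(a/q+\beta)=\frac{\mu(q)}{\varphi(q)}\sum_{m\le X+H}e(\beta m)+O\!\big(Xe^{-c\sqrt L}\big),\qquad S(a/q+\beta)=\frac{C_q(a)}{\varphi(q)}\sum_{n\le P}e(\beta n^{k})+O\!\big(Pe^{-c\sqrt L}\big),
\]
where $C_q(a)=\sum_{(b,q)=1}e(ab^{k}/q)$. Multiplying these out, summing over $\M$, integrating against $e(-\alpha h)$, and completing the singular series and the singular integral in the usual way gives $\mathfrak{S}_k^\ast(h)X^{1/k}+O(X^{1/k}L^{-A})$ uniformly for $h\le X$; the arithmetic sum $\sum_q\varphi(q)^{-2}\sum_{(a,q)=1}C_q(a)e(-ah/q)$ is precisely the Euler product $\mathfrak{S}_k^\ast(h)$, with the coprimality condition $(b,q)=1$ in $C_q(a)$ accounting for the densities $w_k(h,p)$ of (\ref{w_def}), for the factor $\prod_{p\mid h}(1+\tfrac1{p-1})$, and for the local constraint defining $\mathbb{H}_k^{\mathrm{local}}$.

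The decisive step is the minor-arc bound
\[
\sum_{1\le h\le H}\Big|\int_{\m}S(\alpha)T(\alpha)e(-\alpha h)\,d\alpha\Big|^{2}\ll HX^{2/k}L^{-2A},
\]
which, with the major-arc analysis and Chebyshev's inequality, leaves at most $\ll HL^{-A}$ exceptional $h\in[1,H]\cap\mathbb{H}_k$ (after renaming $A$). To prove it I would follow the circle-method treatment of the companion sum $\Psi_k(X,h)$ due to Perelli and Zaccagnini (Theorem~\ref{Perelli_Zaccagnini}). After a Bessel/Gallagher reduction exploiting $H\le X$, the estimate comes down to mean values of $S$ and $T$ over $\m$, organised by a dyadic splitting of the Farey denominator: one balances Vinogradov's pointwise bound for the linear prime sum $T$ against the mean values $\int_0^1|S|^{2}\ll PL$ and $\int_0^1|T|^{2}\ll XL$, and uses in addition the fourth moment $\int_0^1|S|^{4}\ll P^{2+\epsilon}$ together with a Weyl-type bound $\sup_{\m}|S|\ll P^{1-\delta+\epsilon}$. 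The one genuinely new feature relative to Perelli and Zaccagnini is the extra factor $\Lambda(n)$ inside the degree-$k$ Weyl sum $S(\alpha)$; I would dispose of it by Vaughan's identity, resolving $S(\alpha)$ into Type~I and Type~II bilinear sums and estimating each by Weyl differencing (or by the classical bound for $\int_0^1|\sum_{n\le P}e(\alpha n^{k})|^{4}\,d\alpha$), thereby showing that $S(\alpha)$ inherits the minor-arc and moment behaviour of the unweighted Weyl sum $\sum_{n\le P}e(\alpha n^{k})$ up to an $X^{\epsilon}$ loss. With this the Perelli--Zaccagnini machinery goes through and the range $H\ge X^{1-1/k+\epsilon}$ persists.

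I expect the main obstacle to be exactly this control of $\Lambda(n)$ inside the degree-$k$ Weyl sum on the minor arcs. One must show that the off-diagonal part — which morally records the error in the Hardy--Littlewood asymptotic for the shifted prime correlations $\sum_{m}\Lambda(m)\Lambda(m+n_1^{k}-n_2^{k})$ averaged over $n_1,n_2\le P$ — is, away from $\M$, a full power of $X$ smaller than the diagonal term $\sum_{n\le P}\Lambda(n)^{2}\sum_{m}\Lambda(m)^{2}\ll X^{1+1/k}L^{2}$, the latter being admissible precisely because $H\ge X^{1-1/k+\epsilon}$. Bauer's handling of this Weyl sum is effectively confined to Weyl's inequality and Hua-type bounds, which is what costs him the extra exponent $1/2k$; recovering it through the bilinear decomposition is the crux. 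By comparison, the major-arc work, the evaluation of the singular series, and the passage from the mean-square bound to the exceptional set are routine.
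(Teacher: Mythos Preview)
Your approach diverges from the paper's at the decisive minor-arc step, and the point where they diverge is exactly the point you flag as ``the crux'' without actually resolving it. You propose to push the Perelli--Zaccagnini argument through with the $\Lambda$-weighted Weyl sum $S(\alpha)=\sum_{n\le P}\Lambda(n)e(\alpha n^k)$ in place of the unweighted one, disposing of the weight by Vaughan's identity; but you give no mechanism by which the resulting bilinear pieces recover the full Perelli--Zaccagnini saving, and your own final paragraph concedes that this remains open. Weyl differencing applied to Type~I/II sums of degree $k$ is essentially what Bauer does, and it is what confines him to $H\ge X^{1-1/2k+\epsilon}$; simply invoking Vaughan does not explain why you would do better.

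The paper sidesteps this difficulty entirely. After Bessel and a Fej\'er-kernel positivity trick, the minor-arc mean square is bounded by
\[
\int_\m\int_\m |S_1(\alpha)|^2\,|S_k(\beta,M)|^2\,F(\alpha-\beta)\,d\alpha\,d\beta,
\]
and now $|S_k(\beta,M)|^2$ is expanded: the weights $\Lambda(m_1)\Lambda(m_2)$ are discarded at the cost of a harmless $L^2$, leaving the \emph{unweighted} kernel $\Phi(\alpha)=\sum_{m_1,m_2}w(m_1^k-m_2^k)e((m_1^k-m_2^k)\alpha)$. Weyl differencing is applied to $\Phi(\alpha)^{K/2}$ (not to $S_k$), linearising it into $\sum_{1\le|h|\ll H}c(h)e(h\alpha)$ with $c(h)\ll\tau_k(|h|)$. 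What remains is then the twin-prime minor-arc moment
\[
\sum_{1\le h\ll H}\left|\int_\m |S_1(\alpha)|^2 e(h\alpha)\,d\alpha\right|^2,
\]
already controlled by Mikawa and Perelli--Pintz down to $H\ge X^{1/3+\epsilon}$. Thus no minor-arc information about the $\Lambda$-weighted degree-$k$ sum is ever needed; the problem is reduced to degree~$1$.

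A secondary gap: you assert that the major arcs give $\mathfrak{S}_k^\ast(h)X^{1/k}+O(X^{1/k}L^{-A})$ \emph{uniformly} in $h\le X$. The tail of the singular series is not $O(L^{-A})$ uniformly; the paper first obtains the truncated series $\mathfrak{S}_k^\ast(h,P)$ with $P=L^B$ and then invokes Kawada's zero-density argument for Dedekind zeta functions of pure extension fields (Lemma~\ref{completion_singular}) to complete it for all but $\ll HL^{-A}$ values of $h\in[1,H]\cap\mathbb{H}_k$.
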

\noindent
As it can be easily predicted,
our method is also applicable to the conjugate equation (\ref{conjugate_Hua}).
Moreover our method gives
a minor variant of the proof of Theorem \ref{Perelli_Zaccagnini},
i.e. our method is applicable to somewhat broader context
than the method in \cite{Perelli_Zaccagnini}.
Although our method gives an improvement of Theorem \ref{Bauer_full},
it has some disadvantage compared with \cite{Bauer, Liu_Zhan, Perelli_Zaccagnini}.
Briefly speaking, our method can not be applied to the restricted counting function.
See the last section of this paper.

Our method is inspired
by the work \cite{Bauer, Mikawa_three, Mikawa_Peneva, PP_Goldbach}.
In particular, the idea of Mikawa \cite{Mikawa_three}
or its variant of Mikawa and Peneva \cite{Mikawa_Peneva}
gives our strategy for the treatment of the minor arcs.
In these work \cite{Mikawa_three, Mikawa_Peneva},
the minor arc estimates are reduced in an efficient way
to some Vinogradov-type estimates for sums over prime numbers.
In our case,
we shall reduce the minor arcs estimate for the equation (\ref{Hua_eq})
to the minor arc estimate for the twin prime equation (\ref{twin_prime_eq})
which is given
by Mikawa \cite{Mikawa_twin}
or
by Perelli and Pintz \cite{PP_Goldbach}.
See Sections 6 and 7.

\section{Notation}
We shall use the following notation.
Throughout the letters
$\alpha, \eta$
denote real numbers,
$X,Y,H,U,M,P,Q,R,A,B,\epsilon$
denote positive real numbers,
$m,n,d,h,u,N$
denote integers,
$k\ge2$ denotes a positive integer,
$p$ denotes a prime number,
and $L=\log X$.
For any real number $\alpha$,
let $e(\alpha)=e^{2\pi i\alpha}$.
The arithmetic function
$\phi(n)$ denotes the Euler totient function,
$\Lambda(n)$ denotes the von Mangoldt function,
$\mu(n)$ denotes the M\"obius function,
and $\tau_k(n)$ is defined by
\[
\tau_k(n)=\sum_{d_1\cdots d_k=n}1.
\]
The letters $a,q$ denote positive integers satisfying
$(a,q)=1$ and
the expressions
\[\psum_{a\ppmod{q}},\quad\puni_{a\ppmod{q}}\]
denote a sum and a disjoint sum over all reduced residues $a\ppmod{q}$
respectively.

We use the following trigonometric polynomials:
\[
S_1(\alpha)=\sum_{n\le 2X}\Lambda(n)e(n\alpha),\quad
V_1(\eta)=
\sum_{n\le 2X}e(n\eta),
\]
\[
S_k(\alpha)=\sum_{XL^{-4kA}<n^k\le X}\Lambda(n)e(n^k\alpha),\quad
V_k(\eta)=
\frac{1}{k}
\sum_{XL^{-4kA}<n\le X}n^{1/k-1}e(n\eta),
\]
for $k\ge2$.
We introduce the following complete exponential sums
\[C_k(a,q)=\psum_{m\ppmod{q}}e\left(\frac{am^k}{q}\right),\quad
A_k(n,q)=\psum_{a\ppmod{q}}\overline{C_k(a,q)}e\left(-\frac{an}{q}\right).\]
Note that if $(a,q)=1$,
then the exponential sum $C_1(q,a)$
is reduced to the M\"obius function $\mu(q)$.
Then we introduce the remainder terms
\[
R_k(\eta,a,q)=
S_k\left(\frac{a}{q}+\eta\right)-\frac{C_k(a,q)}{\phi(q)}V_k(\eta)
\]
and the truncated singular series
\[
\mathfrak{S}_k^\ast(h,P)
=
\sum_{q\le P}\frac{\mu(q)A_k(h,q)}{\phi(q)^2}.
\]
We shall use the constant
$
K=2^{k-1}.
$
We assume $B\ge B_0(k,A)$,
where $B_0(k,A)$ is some positive constant depends only on $k$ and $A$.
The implicit constants may depend on $k,A,B,\epsilon$.
We assume $A\ge k$ without loss of generality.

\section{The Farey dissection}
As usual,
we deduce Theorem \ref{main_thm}
from the following $L^2$-estimate:
\begin{thm}
\label{main_mean}
Let $X,H,A,B\ge2$, $U\ge0$, $\epsilon>0$, and $P=L^B$.
Assume
\[
X^{1-1/k+\epsilon}\le H\le X,\quad
0\le U\le X.
\]
Then for sufficiently large $B\ge B_0(k,A)$, we have
\begin{equation}
\label{mean_error}
\sum_{U<h\le U+H}
\left|\Psi_k^\ast(X,h)-\mathfrak{S}_k^\ast(h,P)X^{1/k}\right|^2\ll HX^{2/k}L^{-4A},
\end{equation}
where the implicit constant depends on $k,A,B,\epsilon$.
\end{thm}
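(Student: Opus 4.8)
The plan is to attack \eqref{mean_error} by the Hardy-Littlewood circle method, splitting the interval $[0,1]$ into major and minor arcs around the Farey fractions with denominator $q\le P=L^B$. On the major arcs the contribution of $S_k(\alpha)$ is well-approximated by $\frac{C_k(a,q)}{\phi(q)}V_k(\eta)$, with error $R_k(\eta,a,q)$, and the role of $\Lambda(n)$ in the short variable is played by $S_1(\alpha)$, which on the same major arcs is approximated by $\frac{\mu(q)}{\phi(q)}V_1(\eta)$. Writing
\[
\Psi_k^\ast(X,h)=\int_0^1 S_k(\alpha)S_1(-\alpha)e(-h\alpha)\,d\alpha
+O(\text{negligible}),
\]
(the truncation $n^k>XL^{-4kA}$ in $S_k$ and the use of $S_1$ up to $2X$ only affect error terms of acceptable size), I would first show that the major-arc part of this integral produces $\mathfrak{S}_k^\ast(h,P)X^{1/k}$ up to an $L^2$-error over $h$ that is $\ll HX^{2/k}L^{-4A}$. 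This is the ``standard'' half: one expands the product of the two approximations, recognizes the arising Ramanujan-type sums $A_k(h,q)$ and the singular series truncation $\mathfrak{S}_k^\ast(h,P)$, and controls the cross terms involving $R_k$ and the $S_1$-error by Bessel's inequality together with the classical mean-value bounds for $S_1$ and for $R_k$ on major arcs. The width of the major arcs and the choice $B\ge B_0(k,A)$ must be arranged so that the tail of the singular series beyond $P$, and the mean square of the remainder terms, are both power-of-$L$ small.

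The heart of the argument is the minor-arc estimate: one must show
\[
\sum_{U<h\le U+H}\Bigl|\int_{\mathfrak{m}} S_k(\alpha)S_1(-\alpha)e(-h\alpha)\,d\alpha\Bigr|^2
\ll HX^{2/k}L^{-4A}.
\]
Following the idea of Mikawa \cite{Mikawa_three} and Mikawa and Peneva \cite{Mikawa_Peneva} advertised in the introduction, the strategy is \emph{not} to bound $S_k$ pointwise on the minor arcs, but to reduce this to the already-known minor-arc input for the twin prime problem from Mikawa \cite{Mikawa_twin} / Perelli and Pintz \cite{PP_Goldbach} (Theorem~\ref{MPP_thm}). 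The mechanism: after squaring out and summing over $h$ in the dyadic block $(U,U+H]$, the $h$-sum produces a Fejér-type kernel that localizes $\alpha-\beta$ to within $O(1/H)$, so the double integral collapses (up to controllable error) to
\[
H\int_{\mathfrak{m}} |S_k(\alpha)|^2 |S_1(\alpha)|^2\,d\alpha
\]
plus off-diagonal terms. Now the key maneuver is to absorb the $k$-th power structure: substituting $n^k$ and using the $1/H$-localization together with $H\ge X^{1-1/k+\epsilon}$, the sum $S_k(\alpha)$ over $n$ with $n^k\le X$ can be compared, on each minor arc, with a \emph{linear} exponential sum over primes of length $\asymp X^{1/k}$, i.e. effectively with $S_1$ at a rescaled argument; the condition $H\ge X^{1-1/k+\epsilon}$ is exactly what makes the derivative $k\alpha n^{k-1}$ of the phase vary slowly enough over the relevant range that this comparison is legitimate with acceptable error. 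This transfers the problem to an expression of the shape
\[
H\int_{\mathfrak m'}|S_1(\gamma)|^2|S_1(\gamma)|^2\,d\gamma
\]
over a rescaled minor-arc set $\mathfrak m'$ for the twin-prime problem, whose required bound is precisely what Theorem~\ref{MPP_thm} supplies (in its $L^2$/Bessel form, as used by Perelli and Pintz).

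The main obstacle, and where most of the work will go, is making this reduction rigorous uniformly in $U$ and in the minor-arc parameter $\alpha$: one has to (i) justify interchanging the $h$-summation with the integrals and control the resulting off-diagonal/non-diagonal kernel tails; (ii) handle the passage from the sum over $n$ (with $n^k$ ranging up to $X$) to the linear prime sum of length $X^{1/k}$ — this involves a partial summation / van der Corput-type smoothing in the phase $\alpha n^k$ and an honest accounting of the error from the truncation $n^k>XL^{-4kA}$, which is why that truncation was built into $S_k$ from the start; and (iii) ensure the rescaled set $\mathfrak m'$ is genuinely contained in the minor arcs for the twin-prime problem at the relevant scale, so that Theorem~\ref{MPP_thm} applies with the right exponent. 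Steps (i) and (iii) are technical but routine; step (ii) — the efficient reduction of the degree-$k$ minor arcs to the degree-one twin-prime minor arcs, in the spirit of \cite{Mikawa_three,Mikawa_Peneva} — is the crux, and it is precisely here that the hypothesis $H\ge X^{1-1/k+\epsilon}$ (rather than Bauer's $X^{1-1/2k+\epsilon}$) is both needed and sufficient. I would organize the paper so that Sections 4–5 dispose of the major arcs and the standard reductions, and Sections 6–7 carry out this minor-arc transfer in detail.
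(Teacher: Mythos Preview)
Your major-arc outline is fine and matches the paper. The gap is in the minor-arc plan.

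First, the ``collapse'' step is too crude. If you just sum over $h\in(U,U+H]$ and let the resulting kernel force $\alpha\approx\beta$, you arrive at (essentially) $H\int_{\mathfrak m}|S_1(\alpha)|^2|S_k(\alpha)|^2\,d\alpha$. At that point the $k$-th-power differences $m_1^k-m_2^k$ inside $|S_k|^2$ range over all of $[-X,X]$, not $[-2H,2H]$; you have thrown away the only handle that ties the problem to a \emph{short} $h$-range. The paper avoids this by using the Fej\'er kernel in the opposite direction: after the arithmetic--geometric mean step one has $\int_{\mathfrak m}\int_{\mathfrak m}|S_1(\alpha)|^2|S_k(\beta,M)|^2F(\alpha-\beta)\,d\alpha\,d\beta$, and by the change of variables $\beta\mapsto\alpha+\beta$ and expanding $|S_k|^2$, the $\beta$-integral against $F$ produces the weight $w(m_1^k-m_2^k)$, so the restriction $|m_1^k-m_2^k|\le 2H$ is retained. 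This leads, after Cauchy--Schwarz, to the nonnegative kernel
\[
\Phi(\alpha)=\dsum_{M<m_1^k,m_2^k\le M'}w(m_1^k-m_2^k)e\bigl((m_1^k-m_2^k)\alpha\bigr),
\]
and the problem becomes bounding $\int_{\mathfrak m}\int_{\mathfrak m}|S_1(\alpha)|^2|S_1(\beta)|^2\Phi(\alpha-\beta)^{K/2}\,d\alpha\,d\beta$.

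Second, your proposed linearisation of $S_k(\alpha)$ by ``slow variation of $k\alpha n^{k-1}$'' does not work on minor arcs: $\alpha$ is of size up to $1$, so on the full range $n\asymp X^{1/k}$ the phase $\alpha n^k$ is nowhere close to linear (its second difference is $\asymp\alpha X^{1-2/k}$, and over a block of length $X^{1/k}$ this contributes $\asymp\alpha X$). No partial-summation or $B$-process argument turns $S_k(\alpha)$ into a rescaled $S_1(\gamma)$ here. What the paper does instead is apply \emph{Weyl differencing} to $\Phi(\alpha)^{K/2}$ (Lemma~\ref{Phi_Weyl}): iterated differencing converts $\Delta(m^k;d,u_1,\dots,u_{k-2})$ into a \emph{linear} form in $m$, producing a sum $\sum_{1\le|h|\ll H}c(h)e(h\alpha)$ with $c(h)\ll\tau_k(|h|)$ supported on $|h|\ll H$. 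Only then does the twin-prime input enter, and in the form of Theorem~\ref{90_moment},
\[
\sum_{U<h\le U+V}\Bigl|\int_{\mathfrak m}|S_1(\alpha)|^2e(h\alpha)\,d\alpha\Bigr|^2\ll VX^2L^{-C},
\]
not as a fourth-moment bound $\int_{\mathfrak m'}|S_1|^4$. The hypothesis $H\ge X^{1-1/k+\epsilon}$ is used twice in this scheme: to ensure $M^{1-1/k}\le H$ so that $\Phi(0)\ll HM^{2/k-1}$ (Lemma~\ref{Phi_at_0_estimate}), and to absorb the secondary error $H^{1/2}XM^{3/2k-1/2}$ at the end of Section~7.

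In short: keep the Fej\'er weight attached to $|S_k|^2$ so that $|m_1^k-m_2^k|\le 2H$, linearise the resulting kernel by Weyl differencing rather than by any phase-smoothing of $S_k$ itself, and feed the output into the Mikawa/Perelli--Pintz mean-square estimate over $h$.
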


We start the proof of Theorem \ref{main_mean}.
We can assume that $U$ and $H$ are positive integers
since the contribution of some bounded variation of $U$ or $H$ to (\ref{mean_error})
is at most\footnote{
See the estimate (\ref{truncated_estimate}) in Section 5.
}
\[\ll X^{2/k}L^{2k}\ll HX^{2/k}L^{-3A}.\]
Moreover,
notice that it is sufficient to prove Theorem \ref{main_mean} for the case
\[X^{1-1/k+\epsilon}\le H\le X^{4/5},\]
which makes the proof of Theorem \ref{90_moment} simpler.

By the orthogonality of additive characters we have
\begin{align}
\Psi_k^\ast(X,h)
=&\ 
\sum_{XL^{-4kA}<n^k\le X}\Lambda(n)\Lambda(n^k+h)
+O(X^{1/k}L^{-3A})\notag\\
\label{Fourier}
=&\ 
\int_0^1S_1(\alpha)\overline{S_k(\alpha)}e(-h\alpha)d\alpha+O(X^{1/k}L^{-3A})
\end{align}
for any $h\le H$.
We use the Farey dissection given by
\begin{gather*}
P=L^B,\quad
Q=H^{1/2},\quad
R=XP^{-4},\quad
I=\left[\frac{1}{Q},1+\frac{1}{Q}\right],\\
\Maq=\left[\frac{a}{q}-\frac{1}{qQ},\frac{a}{q}+\frac{1}{qQ}\right],\quad
\Naq=\left[\frac{a}{q}-\frac{1}{qR},\frac{a}{q}+\frac{1}{qR}\right],\\
\M=\coprod_{q\le P}\puni_{a\ppmod{q}}\Naq,\quad
\m=I\setminus\M.
\end{gather*}
Then by the integral expression (\ref{Fourier}),
we have
\begin{gather*}
\sum_{U<h\le U+H}
\left|\Psi_k^\ast(X,h)-\mathfrak{S}_k^\ast(h,P)X^{1/k}\right|^2\\
\ll\sum_{U<h\le U+H}
\left|\int_\M S_1(\alpha)\overline{S_k(\alpha)}e(-h\alpha)d\alpha
-\mathfrak{S}_k^\ast(h,P)X^{1/k}\right|^2\\
+\sum_{U<h\le U+H}
\left|\int_\m S_1(\alpha)\overline{S_k(\alpha)}e(-h\alpha)d\alpha\right|^2
+HX^{2/k}L^{-6A},
\end{gather*}
which is
\[=\sum_\M+\sum_\m+HX^{2/k}L^{-6A},\ \text{say}.\]

\begin{rem}
At first sight, the Farey arcs $\Maq$ are not used in the course of the proof.
However, we discuss the arcs $\Maq$ for the proof of Theorem \ref{90_moment}.
See \cite[Section 5]{PP_Goldbach}.
\end{rem}

\section{Preliminary lemmas}
We first approximate the trigonometric polynomial $S_k(\alpha)$
in a standard way.

\begin{lem}
\label{S_approx}
We have
\[S_k\left(\frac{a}{q}+\eta\right)
=\frac{C_k(a,q)}{\phi(q)}V_k(\eta)
+O\left(q(1+|\eta|X)X^{1/k}P^{-16}\right)\]
for any $k\ge1$.
\end{lem}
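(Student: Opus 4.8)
The plan is to reduce the exponential sum $S_k(a/q+\eta)$ over primes to the "expected" main term $\frac{C_k(a,q)}{\phi(q)}V_k(\eta)$ by sorting the primes $p$ with $XL^{-4kA}<p^k\le X$ according to their residue class $b$ modulo $q$. First I would write
\[
S_k\Bigl(\frac{a}{q}+\eta\Bigr)
=
\sum_{\substack{b\ppmod q\\(b,q)=1}}e\Bigl(\frac{ab^k}{q}\Bigr)
\sum_{\substack{XL^{-4kA}<p^k\le X\\ p\equiv b\,(q)}}\log p\cdot e(p^k\eta)
+O(\log q\cdot\log X),
\]
where the error absorbs the prime powers (of which there are $O(X^{1/2k}\log X)$, hence harmless against $X^{1/k}P^{-16}$ since $q\le P$). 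The sum over $b$ can be split off the complete sum $C_k(a,q)=\sum_{(b,q)=1}e(ab^k/q)$ once the inner sum over $p\equiv b\,(q)$ is shown to be essentially independent of $b$. To that end I would replace the inner sum by $\frac{1}{\phi(q)}\sum_{XL^{-4kA}<p^k\le X}\log p\cdot e(p^k\eta)$ plus an error, which is the heart of the argument.

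For the main step I would perform partial summation in the variable $t=p^k$ (or equivalently in $p$) to peel off the smooth weight $e(p^k\eta)$, reducing matters to the error in the prime number theorem in arithmetic progressions: $\psi(y;q,b)-\frac{y}{\phi(q)}$. Since $q\le P=L^B$ with $B=B_0(k,A)$ chosen large, the Siegel--Walfisz theorem gives $\psi(y;q,b)=\frac{y}{\phi(q)}+O(y\exp(-c\sqrt{\log y}))$ uniformly for such $q$, and $y\exp(-c\sqrt{\log y})\ll X^{1/k}L^{-C}$ for any fixed $C$, in particular with enough room to kill $P^{16}=L^{16B}$. Partial summation over the range $XL^{-4kA}<p^k\le X$, i.e. $X^{1/k}L^{-4A}<p\le X^{1/k}$, introduces a factor bounded by $1+|\eta|\cdot(\text{length of the }p^k\text{-range})\ll 1+|\eta|X$, which accounts precisely for the $(1+|\eta|X)$ in the stated error; summing the resulting $O(X^{1/k}L^{-C})$ over the $\phi(q)\le q$ residues $b$ and then over the factor $q$ coming out front gives the claimed bound $O(q(1+|\eta|X)X^{1/k}P^{-16})$. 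Finally I would recognize $\frac{1}{\phi(q)}\sum_{XL^{-4kA}<p^k\le X}\log p\cdot e(p^k\eta)$, after one more application of partial summation replacing $\sum_{p\le y}\log p$ by $y$, as $\frac{1}{\phi(q)}V_k(\eta)$ up to an error of the same admissible size, where $V_k(\eta)=\frac1k\sum_{XL^{-4kA}<n\le X}n^{1/k-1}e(n\eta)$ is exactly the Riemann--Stieltjes integral $\int e(t\eta)\,d(t^{1/k})$ over the relevant range.

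The case $k=1$ is slightly different notationally since $S_1$ runs over all $n\le 2X$ and $V_1(\eta)=\sum_{n\le2X}e(n\eta)$ with $C_1(a,q)=\mu(q)$; the same Siegel--Walfisz plus partial summation argument applies verbatim with $X$ replaced by $2X$ throughout, and the stated error is unchanged in form. The main obstacle — really the only non-routine ingredient — is securing the uniformity in $q$ up to $q\le P=L^B$, which is exactly what forces the hypothesis $B\ge B_0(k,A)$: one needs the Siegel--Walfisz error term to beat $P^{-16}L^{-(\text{large})}$, so $B_0(k,A)$ must be chosen so that $c\sqrt{\log X}$ dominates $(16B+O(A))\log\log X$, which it does for all large $X$ once $B$ is fixed. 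Everything else is bookkeeping with partial summation and the trivial bound on prime powers.
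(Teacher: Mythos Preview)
Your approach is essentially the paper's: sort $n$ into reduced residue classes modulo $q$, apply Siegel--Walfisz with partial summation to handle the twist $e(n^k\eta)$, and reassemble $C_k(a,q)/\phi(q)\cdot V_k(\eta)$. The bookkeeping you describe matches the paper's proof closely.

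There is one genuine oversight. You repeatedly assume $q\le P$ (``since $q\le P$'', ``Since $q\le P=L^B$\ldots''), but the lemma is stated for arbitrary $q$ with $(a,q)=1$, and Siegel--Walfisz gives nothing useful once $q$ exceeds any fixed power of $\log X$. The paper handles this with a one-line observation you are missing: if $q>P^{16}$ then the claimed error term $q(1+|\eta|X)X^{1/k}P^{-16}$ already exceeds $X^{1/k}$, so the asserted asymptotic is trivially true and one may assume $q\le P^{16}$ before invoking Siegel--Walfisz. Without this reduction your argument does not cover all $q$. (Note also that the paper works under $q\le P^{16}$, not $q\le P$; either threshold works, but the point is that the factor $q$ in the error term is precisely what makes the large-$q$ case vacuous.)

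A minor side remark: the hypothesis $B\ge B_0(k,A)$ is a standing assumption in the paper and is not ``forced'' by this lemma in the way you suggest; Siegel--Walfisz saves an arbitrary power of $\log$, so any fixed $B$ would do here.
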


\begin{proof}
If $q>P^{16}$, then this lemma is reduced to the trivial estimate since
\[
q(1+|\eta|X)X^{1/k}P^{-16}\gg X^{1/k}.
\]
Hence we assume $q\le P^{16}$ without loss of generality.  
We have
\begin{equation}
\label{pre_Siegel_Walfisz}
S_k\left(\frac{a}{q}+\eta\right)
=
\psum_{m\ppmod{q}}
e\left(\frac{am^k}{q}\right)
\sum_{n\equiv m\ppmod{q}}
\Lambda(n)e(n^k\eta)
+O(L^2).
\end{equation}
By the Siegel-Walfisz theorem \cite[Corollary 5.29]{Iwaniec_Kowalski},
we have
\[
\sum_{n\equiv m\ppmod{q}}
\Lambda(n)e(n^k\eta)
=
\frac{1}{\phi(q)}V_k(\eta)+O\left((1+|\eta|X)X^{1/k}P^{-16}\right).
\]
Substituting this into (\ref{pre_Siegel_Walfisz}),
we obtain the lemma.
\end{proof}

We next recall some basic facts on the complete exponential sums.
For the detailed proofs and discussions,
see Section 4 and 5 of \cite{Bauer_thesis}.

\begin{lem}[{\cite[Lemma 4.3 (b)]{Bauer_thesis}}]
\label{multiplicative_A}
Suppose that $(q_1,q_2)=1$.
Then
\[A_k(h,q_1q_2)=A_k(h,q_1)A_k(h,q_2).\]
\end{lem}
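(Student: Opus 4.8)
The plan is to expand $A_k$ as a genuine double character sum and then split it via the Chinese Remainder Theorem. Since $C_k(a,q)$ is a sum of roots of unity, $\overline{C_k(a,q)}=\psum_{m\ppmod{q}}e(-am^k/q)$, so inserting this into the definition of $A_k$ gives
\[
A_k(h,q)=\psum_{a\ppmod{q}}\psum_{m\ppmod{q}}e\!\left(\frac{-a(m^k+h)}{q}\right).
\]
This recasts the claimed factorization as a statement about a double sum over pairs of reduced residues, which is the setting where the Chinese Remainder Theorem applies transparently.

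Next, assume $(q_1,q_2)=1$ and write $q=q_1q_2$. Parametrize the summation variables by CRT: let $a$ correspond to $(a_1,a_2)$ and $m$ to $(m_1,m_2)$, with $a_i,m_i$ ranging over reduced residues modulo $q_i$. Fix $\overline{q_2}$ with $\overline{q_2}q_2\equiv1\ppmod{q_1}$ and $\overline{q_1}$ with $\overline{q_1}q_1\equiv1\ppmod{q_2}$. From the elementary congruence $\frac{c}{q_1q_2}\equiv\frac{c\,\overline{q_2}}{q_1}+\frac{c\,\overline{q_1}}{q_2}\ppmod{1}$, valid for every integer $c$, applied with $c=a(m^k+h)$ together with $a(m^k+h)\equiv a_i(m_i^k+h)\ppmod{q_i}$, one factors the exponential as
\[
e\!\left(\frac{-a(m^k+h)}{q_1q_2}\right)
=e\!\left(\frac{-a_1\overline{q_2}(m_1^k+h)}{q_1}\right)
 e\!\left(\frac{-a_2\overline{q_1}(m_2^k+h)}{q_2}\right),
\]
so the double sum over $(a,m)$ factors as the product of the analogous double sums over $(a_1,m_1)$ and $(a_2,m_2)$.

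Finally, in the $q_1$-sum substitute $b_1\equiv a_1\overline{q_2}\ppmod{q_1}$; since $\overline{q_2}$ is a unit modulo $q_1$ this permutes the reduced residues and removes the twist, identifying the factor with $A_k(h,q_1)$, and likewise the $q_2$-sum becomes $A_k(h,q_2)$. Multiplying yields the lemma. There is no real obstacle here: the only point requiring care is bookkeeping the several modular inverses in the CRT step and checking that each change of variables is a bijection of reduced residue classes, which holds precisely because the relevant multipliers are units. An alternative organization, used in \cite{Bauer_thesis}, first records the twisted multiplicativity $C_k(a,q_1q_2)=C_k(a\overline{q_2},q_1)\,C_k(a\overline{q_1},q_2)$ and then substitutes it into the definition of $A_k$; the direct double-sum computation above is slightly more economical.
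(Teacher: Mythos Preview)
Your proof is correct and is essentially the same approach as the paper's, which simply states that the lemma ``immediately follows from the Chinese remainder theorem.'' You have merely spelled out the CRT bookkeeping in full detail; nothing in your argument deviates from the intended route.
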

\begin{proof}
Immediately follows from the Chinese remainder theorem.
\end{proof}

\begin{lem}[{\cite[Lemma 4.4 (a)]{Bauer_thesis}}]
\label{explicit_A}
For any prime $p$,
we have
\[A_k(h,p)=p\cdot w_k(h,p)-\phi(p),\]
where $w_k(h,p)$ is given by $(\ref{w_def})$.
\end{lem}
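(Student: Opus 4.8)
The plan is to evaluate $A_k(h,p)$ directly from its definition by interchanging the order of summation. Writing
\[
A_k(h,p)=\psum_{a\ppmod{p}}\overline{C_k(a,p)}\,e\!\left(-\frac{ah}{p}\right)
=\psum_{a\ppmod{p}}\ \sum_{\substack{m\ppmod{p}\\ (m,p)=1}}e\!\left(-\frac{a(m^k+h)}{p}\right),
\]
I would first extend the $a$-sum to a full sum over $a\ppmod{p}$ by subtracting the $a\equiv 0$ term. The $a=0$ contribution is $\sum_{(m,p)=1}1=\phi(p)$, so
\[
A_k(h,p)=\Bigg(\sum_{a\ppmod{p}}\ \sum_{\substack{m\ppmod{p}\\ (m,p)=1}}e\!\left(-\frac{a(m^k+h)}{p}\right)\Bigg)-\phi(p).
\]

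Next I would swap the two sums in the main term and apply the orthogonality relation $\sum_{a\ppmod{p}}e(-at/p)=p$ if $p\mid t$ and $0$ otherwise. This collapses the inner $a$-sum to $p$ exactly when $m^k+h\equiv 0\ppmod{p}$ and kills it otherwise, leaving
\[
\sum_{a\ppmod{p}}\ \sum_{\substack{m\ppmod{p}\\ (m,p)=1}}e\!\left(-\frac{a(m^k+h)}{p}\right)
= p\cdot\bigl|\{m\ppmod{p}: m^k+h\equiv 0\ppmod{p},\ (m,p)=1\}\bigr|
= p\cdot w_k(h,p),
\]
by the definition \eqref{w_def} of $w_k(h,p)$. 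Combining the two displays gives $A_k(h,p)=p\cdot w_k(h,p)-\phi(p)$, which is the claimed identity.

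There is no real obstacle here; the only point requiring a moment of care is the coprimality condition $(m,p)=1$ attached to the sum defining $C_k(a,p)$ (it is a $\psum$, i.e.\ over reduced residues), which is precisely why the count that appears is $w_k(h,p)$ rather than $r_k(h,p)$, and why the $a=0$ term contributes $\phi(p)$ rather than $p$. One should also note that $C_k(a,p)$ is real when we are summing $e(am^k/p)$ over a symmetric-enough set — but in fact we never need that: the conjugate $\overline{C_k(a,p)}$ simply replaces $e(am^k/p)$ by $e(-am^k/p)$, which is what was used above, so the computation goes through verbatim.
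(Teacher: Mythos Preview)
Your proof is correct and is exactly what the paper has in mind: the paper's own proof consists of the single sentence ``Immediately follows from the orthogonality,'' and your argument is precisely the standard unpacking of that sentence. There is nothing to add or compare.
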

\begin{proof}
Immediately follows from the orthogonality.
\end{proof}

\section{The major arcs}
In this section,
we shall evaluate the integral over the major arcs.
We have
\begin{gather*}
\int_\M S_1(\alpha)\overline{S_k(\alpha)}e(-h\alpha)d\alpha\\
=\sum_{q\le P}\psum_{a\ppmod{q}}e\left(-\frac{ah}{q}\right)
\int_{|\eta|\le1/qR}
S_1\left(\frac{a}{q}+\eta\right)
\overline{S_k\left(\frac{a}{q}+\eta\right)}
e(-h\eta)d\eta,
\end{gather*}
which we denote by
\[=\sum_{q\le P}\psum_{a\ppmod{q}}e\left(-\frac{ah}{q}\right)J_{a,q}(h).\]
We approximate each integral $J_{a,q}(h)$
by decomposing into the following parts:
\[J_{a,q}(h)=A_{a,q}(h)+B_{a,q}(h)+C_{a,q}(h)+I_{a,q}(h),\]
where
\begin{gather*}
A_{a,q}(h)=
\int_{|\eta|\le1/qR}
S_1\left(\frac{a}{q}+\eta\right)\overline{R_k(\eta,a,q)}e(-h\eta)d\eta,\\
B_{a,q}(h)=
\frac{\overline{C_k(a,q)}}{\phi(q)}
\int_{|\eta|\le1/qR}R_1(\eta,a,q)\overline{V_k(\eta)}e(-h\eta)d\eta,\\
C_{a,q}(h)=-\frac{\mu(q)\overline{C_k(a,q)}}{\phi(q)^2}
\int_{1/qR<|\eta|\le1/2}V_1(\eta)\overline{V_k(\eta)}e(-h\eta)d\eta,\\
I_{a,q}(h)=\frac{\mu(q)\overline{C_k(a,q)}}{\phi(q)^2}
\int_{|\eta|\le1/2}V_1(\eta)\overline{V_k(\eta)}e(-h\eta)d\eta.
\end{gather*}
We shall prove the estimates
\begin{equation}
\label{ABC_estimate}
A_{a,q}(h), B_{a,q}(h), C_{a,q}(h)\ll X^{1/k}P^{-2}L^{-2A},
\end{equation}
and the asymptotic formula
\begin{equation}
\label{asymp_I}
\sum_{q\le P}\psum_{a\ppmod{q}}e\left(-\frac{ah}{q}\right)I_{a,q}(h)=
\mathfrak{S}_k^\ast(h,P)X^{1/k}+O(X^{1/k}L^{-2A}).
\end{equation}

We start with $A_{a,q}(h)$.
Since $S_1(\alpha)\ll X$,
we have
\[
A_{a,q}(h)\ll
X\int_{|\eta|\le1/qR}\left|R_k(\eta,a,q)\right|d\eta.
\]
Then Lemma \ref{S_approx} gives
\[
A_{a,q}(h)
\ll
X^{2+1/k}R^{-2}P^{-16}
\ll
X^{1/k}P^{-2}L^{-2A}.
\]
This proves (\ref{ABC_estimate}) for $A_{a,q}(h)$.
The integral $B_{a,q}(h)$ can be estimated similarly.

We next estimate the integral $C_{a,q}(h)$.
Note that for $|\eta|\le 1/2$
\[
V_1(\eta)\ll|\eta|^{-1},\quad
V_k(\eta)\ll\frac{L^{4kA}}{X^{1-1/k}|\eta|}.
\]
For the proof of these estimates, see \cite[Corollary 8.11]{Iwaniec_Kowalski}.
Thus we have
\[
C_{a,q}(h)
\ll
\frac{X^{1/k-1}L^{4kA}}{\phi(q)}
\int_{1/qR<|\eta|\le1/2}\frac{d\eta}{|\eta|^2}
\ll
RX^{1/k-1}L^{5kA}
\ll
X^{1/k}P^{-2}L^{-2A}.
\]
This proves (\ref{ABC_estimate}) for $C_{a,q}(h)$.

Finally we prove the asymptotic formula (\ref{asymp_I}).
Clearly
\[
\sum_{q\le P}\psum_{a\ppmod{q}}e\left(-\frac{ah}{q}\right)I_{a,q}(h)
=
\mathfrak{S}_k^\ast(h,P)
\int_{|\eta|\le1/2}V_1(\eta)\overline{V_k(\eta)}e(-h\eta)d\eta.
\]
By the orthogonality of additive characters,
we have
\[
\int_{|\eta|\le1/2}V_1(\eta)\overline{V_k(\eta)}e(-h\eta)d\eta
=
X^{1/k}+O(X^{1/k}L^{-4A}).
\]
Since Lemma \ref{multiplicative_A} and \ref{explicit_A} implies
\begin{equation}
\label{truncated_estimate}
\mathfrak{S}_k^\ast(h,P)
\ll
\sum_{q\le P}\frac{\mu^2(q)k^{\nu(q)}q}{\phi(q)^2}
\ll\prod_{p\le P}\left(1+\frac{kp}{(p-1)^2}\right)\ll L^k,
\end{equation}
we obtain (\ref{asymp_I}).

By
(\ref{ABC_estimate})
and
(\ref{asymp_I}),
we arrive at
\begin{equation}
\label{major_arc_estimate}
\sum_\M\ll HX^{2/k}L^{-4A}.
\end{equation}
This completes the evaluation of the major arcs.

\section{Lemmas for the minor arcs}
\label{lemma_minor}
The remaining task is to estimate the integral over the minor arcs.
In this section,
we prepare some lemmas for the minor arc estimate.

As we mentioned before,
we shall reduce our minor arc estimate
to the corresponding estimate for the twin prime problem.
This minor arc estimate was obtained by Mikawa \cite{Mikawa_twin}
or by Perelli and Pintz \cite{PP_Goldbach}.
Their result can be stated as:
\begin{thm}
\label{90_moment}
Let $0\le U\le X$, $H\le V\ll X$
and assume the above setting.
Then
\[\sum_{U<h\le U+V}
\left|\int_\m\left|S_1(\alpha)\right|^2e(h\alpha)d\alpha\right|^2
\ll
VX^2L^{-32kKA}\]
for sufficiently large $B\ge B_0(k,A)$.
\end{thm}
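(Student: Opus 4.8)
The plan is not to prove this from scratch but to identify it as the minor-arc mean-square bound that underlies the almost-all twin-prime asymptotic of Mikawa \cite{Mikawa_twin} and of Perelli and Pintz \cite{PP_Goldbach}, and to check that their argument --- in the form of \cite[Section 5]{PP_Goldbach} --- applies in the present setting. Two cosmetic adjustments are needed. First, the trigonometric polynomial $S_1(\alpha)=\sum_{n\le 2X}\Lambda(n)e(n\alpha)$ differs from the one used in the cited works only in the value of the cutoff, so one simply rescales $X\mapsto 2X$ there. Second, the Farey dissection of \cite{Mikawa_twin, PP_Goldbach} has the same shape as ours --- major arcs $\Naq$ of radius $1/(qR)$ about fractions with denominator $q\le P$ --- and differs only in the precise powers of $L$ built into $P$ and $R$; since $|S_1(\alpha)|^2e(h\alpha)$ is $1$-periodic the interval $I$, and hence the choice of $Q$, plays no role here, and changing the size of the major arcs by a bounded power of $L$ affects only the threshold $B_0(k,A)$. (In arithmetic terms, removing the major arcs and carrying out the standard evaluation of $\int_\M|S_1(\alpha)|^2e(h\alpha)\,d\alpha$, as in Section 5 with $k=1$, identifies the quantity in question, up to an admissible error, with the mean square over $h$ of the error term in the Hardy--Littlewood formula for twin primes of difference $h$.)

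The substance lies in the minor-arc estimate of \cite{Mikawa_twin, PP_Goldbach}. There one expands the square, obtaining
\[
\int_\m\int_\m |S_1(\alpha)|^2|S_1(\beta)|^2
\left(\sum_{U<h\le U+V}e\bigl(h(\alpha-\beta)\bigr)\right)d\alpha\,d\beta ,
\]
and the delicate part is where $\alpha$ and $\beta$ are close, so that the inner sum is of size $\asymp V$: here the required saving cannot be obtained from the Vinogradov--Vaughan pointwise bound for $S_1$ on the minor arcs, which --- even combined with the trivial $L^2$-bound for $S_1$ --- loses a power of $X$; it must instead be extracted from a mean-value estimate for $S_1$ over the minor arcs that genuinely exploits the hypothesis $V\gg X^{1/3+\epsilon}$. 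This is where the exponent $1/3$ enters, and it is the main obstacle. In our situation $V\ge H\ge X^{1-1/k+\epsilon}\ge X^{1/2+\epsilon}$ lies well inside the admissible range, so no sharpening of the cited argument is needed.

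Uniformity in the location $U$ of the interval --- which is precisely what makes the auxiliary arcs $\Maq$ enter, cf.\ the remark above and \cite[Section 5]{PP_Goldbach} --- is part of the cited results, and the exponent of $L$ they produce increases without bound as $B$ grows. Taking $B\ge B_0(k,A)$ large enough that this exponent is at least $32kKA$ finishes the proof.
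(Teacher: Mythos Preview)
Your proposal is correct and follows the same approach as the paper: the paper does not prove this theorem independently but cites it as the minor-arc mean-square bound of Mikawa \cite{Mikawa_twin} and Perelli--Pintz \cite{PP_Goldbach}, noting (as you do) that the Farey dissection has the same shape as in \cite{PP_Goldbach}, that the auxiliary arcs $\Maq$ are what make the argument uniform in $U$ (cf.\ \cite[Section~5]{PP_Goldbach}), and that the present range of $H$ lies comfortably inside their admissible range $X^{1/3+\epsilon}\le H\ll X$. Your additional remarks on rescaling $X\mapsto 2X$ and on the irrelevance of $Q$ to the definition of $\m$ are accurate elaborations; the paper also observes that restricting to $H\le X^{4/5}$ simplifies the quoted argument, a point you might add.
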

\noindent%
Since our Farey dissection is given in the same manner
as Perelli and Pintz \cite{PP_Goldbach} used,
it is more direct to apply the proof of Perelli and Pintz \cite{PP_Goldbach}.
Note that the admissible range of $H$ obtained
in \cite{Mikawa_twin, PP_Goldbach} is $X^{1/3+\epsilon}\le H\ll X$,
which is much stronger than we need here.

As for the reduction of our minor arc estimate to Theorem \ref{90_moment},
we use the idea of Mikawa and Peneva \cite{Mikawa_Peneva}.
In order to carry out their technique with general exponent $k$,
we need some lemmas
which correspond to Lemma 3 in \cite{Mikawa_Peneva}.

We use the Ces\`aro weight
\[w(h)=\max\left(1-\frac{|h|}{2H},0\right),\]
which appears as the coefficient of the Fej\'er kernel%
\footnote{
Recall that we assume $H$ is a positive integer.
}
\[F(\alpha)=\sum_{|h|\le 2H}w(h)e(h\alpha).\]
Recall that the Fej\'er kernel is non-negative
since
\begin{align*}
F(\alpha)=\frac{1}{2H}\left(\frac{\sin2\pi H\alpha}{\sin\pi\alpha}\right)^2.
\end{align*}
For any real numbers $M$ and $M'$ satisfying
\[1\le M<M'\le 2M,\quad M^{1-1/k}\le H,\]
we let
\[\Phi(\alpha):=
\dsum_{M<m_1^k,m_2^k\le M'}
w(m_1^k-m_2^k)e((m_1^k-m_2^k)\alpha).\]
Our first two lemmas are on some basic properties of this kernel $\Phi(\alpha)$.
\begin{lem}
\label{Phi_positive}
For any real number $\alpha$, we have $\Phi(\alpha)\ge0$.
\end{lem}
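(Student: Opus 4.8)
The plan is to exploit the property that motivated the definition of $w$: the number $w(h)$ is the $h$-th Fourier coefficient of the non-negative Fej\'er kernel $F$. The first step is to record the identity
\[
w(h)=\int_0^1 F(\beta)e(-h\beta)\,d\beta ,
\]
which holds for \emph{every} integer $h$, since $F(\beta)=\sum_{|h|\le 2H}w(h)e(h\beta)$ is a trigonometric polynomial whose Fourier coefficients are exactly the numbers $w(h)$ and since $w(h)=0$ whenever $|h|\ge 2H$; thus no truncation issue arises.

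The second step is to insert this formula into the definition of $\Phi(\alpha)$ and to interchange the (finite) double summation over $m_1,m_2$ with the integration in $\beta$. This yields
\[
\Phi(\alpha)=\int_0^1 F(\beta)\dsum_{M<m_1^k,m_2^k\le M'}e\bigl((m_1^k-m_2^k)(\alpha-\beta)\bigr)\,d\beta ,
\]
and the inner double sum factors as a squared modulus, so that
\[
\Phi(\alpha)=\int_0^1 F(\beta)\,\Bigl|\sum_{M<m^k\le M'}e\bigl(m^k(\alpha-\beta)\bigr)\Bigr|^2\,d\beta .
\]
Since $F(\beta)\ge 0$ for every real $\beta$, the integrand is non-negative, and therefore $\Phi(\alpha)\ge 0$.

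I do not expect any genuine obstacle here: the only point requiring justification is the interchange of summation and integration, which is immediate because $m_1$ and $m_2$ run over finite sets. It is worth noting for the sequel that the same computation shows, more generally, that integrating $\Phi$ against an arbitrary non-negative weight produces a non-negative quantity; this positivity is precisely the feature that will be exploited in the minor-arc reduction of Sections 6 and 7.
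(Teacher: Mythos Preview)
Your proof is correct and follows essentially the same approach as the paper: both insert the Fourier identity $w(h)=\int F(\beta)e(\mp h\beta)\,d\beta$, interchange the finite double sum with the integral, and recognize the resulting inner sum as a squared modulus against the non-negative Fej\'er kernel. The only cosmetic differences are the interval of integration ($[0,1]$ versus $[-1/2,1/2]$) and the sign in the exponential, both immaterial since $F$ is $1$-periodic and $w$ is even.
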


\begin{proof}
We have
\begin{align*}
\Phi(\alpha)
=&
\dsum_{M<m_1^k,m_2^k\le M'}
\left(\int_{-1/2}^{1/2}F(\eta)e((m_1^k-m_2^k)\eta)d\eta\right)
e((m_1^k-m_2^k)\alpha)\\
=&\int_{-1/2}^{1/2}
\left|\sum_{M<m^k\le M'}e(m^k(\alpha+\eta))\right|^2
F(\eta)d\eta\ge0.
\end{align*}
This gives the lemma.
\end{proof}

\begin{lem}
\label{Phi_at_0_estimate}
Suppose that $M^{1-1/k}\le H$.
Then $\Phi(0)\ll HM^{2/k-1}$.
\end{lem}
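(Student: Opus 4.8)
The plan is to read off $\Phi(0)$ as a lattice-point count and estimate it by fixing one of the two variables and counting the other by an elementary divisor-type argument.

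At $\alpha=0$ we simply have
\[
\Phi(0)=\dsum_{M<m_1^k,m_2^k\le M'}w(m_1^k-m_2^k).
\]
Since $0\le w(h)\le1$ for every $h$ and $w(h)=0$ whenever $|h|\ge2H$, it follows that $\Phi(0)\le N$, where $N$ is the number of pairs $(m_1,m_2)$ of positive integers with $M<m_1^k,m_2^k\le M'$ and $|m_1^k-m_2^k|\le2H$.

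I would then bound $N$ by summing over $m_2$. Because $M'\le2M$, the integer $m_2$ lies in the interval $(M^{1/k},M'^{1/k}]$, which has length $\ll M^{1/k}$, so there are $\ll M^{1/k}$ choices of $m_2$. For a fixed admissible $m_2$, suppose $m_1<m_1'$ are both admissible; then $m_1,m_1'\in(M^{1/k},M'^{1/k}]$, hence $m_1^{k-1}\ge M^{1-1/k}$, and
\[
m_1'^k-m_1^k=(m_1'-m_1)\sum_{j=0}^{k-1}m_1'^{\,j}m_1^{\,k-1-j}\ge k(m_1'-m_1)M^{1-1/k}.
\]
On the other hand $m_1'^k-m_1^k\le|m_1'^k-m_2^k|+|m_2^k-m_1^k|\le4H$, so $m_1'-m_1\le 4HM^{1/k-1}$. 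Thus the admissible values of $m_1$ for this $m_2$ all lie in an interval of length $\ll HM^{1/k-1}$; invoking the hypothesis $M^{1-1/k}\le H$, which forces $HM^{1/k-1}\ge1$, the number of such integers $m_1$ is $\ll HM^{1/k-1}$.

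Putting the two counts together gives $\Phi(0)\le N\ll M^{1/k}\cdot HM^{1/k-1}=HM^{2/k-1}$, as claimed. There is no substantial obstacle here: the argument is a one-dimensional mean-value-free bound, and the only delicate point is that the per-$m_2$ count of $m_1$'s is ``interval length plus one'', where the ``$+1$'' is absorbed precisely by the assumption $M^{1-1/k}\le H$ — which is exactly the role that hypothesis plays in the statement.
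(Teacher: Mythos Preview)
Your proof is correct and essentially the same as the paper's: both bound $\Phi(0)$ by counting pairs $(m_1,m_2)$ with $|m_1^k-m_2^k|\le2H$, fix one variable (the paper sets $m=m_2$, $d=m_1-m_2$), and use $m_1^k-m_2^k\gg(m_1-m_2)M^{1-1/k}$ to see that the other ranges over an interval of length $\ll HM^{1/k-1}$, with the hypothesis $M^{1-1/k}\le H$ absorbing the ``$+1$''. The only cosmetic difference is that you bound the diameter of the admissible $m_1$-set via the triangle inequality, whereas the paper directly bounds $d=m_1-m_2$ from the factorization $m_1^k-m_2^k=d(km_2^{k-1}+\cdots+d^{k-1})$.
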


\begin{proof}
We have
\[
\Phi(0)
=
\dsum_{M<m_1^k,m_2^k\le M'}w(m_1^k-m_2^k)
\ll
\dsum_{\substack{
M<m_1^k,m_2^k\le M'\\
m_1\ge m_2
}}w(m_1^k-m_2^k).
\]
Here we introduce two new variables
\[d=m_1-m_2,\quad m=m_2.\]
By the definition of $w(h)$,
we find that
\begin{equation}
\label{support_w}
|h|>2H\ 
\Longrightarrow\ 
w(h)=0.
\end{equation}
Hence our new variables $d$ and $m$ are restricted by
\[0\le m_1^k-m_2^k=(m+d)^k-m^k=d(km^{k-1}+\cdots+d^{k-1})\le2H.\]
In particular, we can restrict the variable $d$ by
\[0\le d\le HM^{1/k-1}.\]
Therefore%
\footnote{
Notice that $HM^{1/k-1}\gg1$ by the assumption $M^{1-1/k}\le H$.
}
\[
\Phi(0)
=
\dsum_{M<m_1^k,m_2^k\le M'}w(m_1^k-m_2^k)
\ll
\sum_{0\le d\le HM^{1/k-1}}\sum_{m\ll M^{1/k}}1
\ll HM^{2/k-1}.
\]
This completes the proof.
\end{proof}

Next we want
to reduce the degree of the polynomial in the definition of $\Phi(\alpha)$
by using the Weyl differencing.
We start with recalling the Weyl differencing in the form we use.
We introduce some notation following Bauer \cite[Section 3]{Bauer}.
We use the forward difference operator $\Delta(\ast;u_1,\dots,u_k)$
on the polynomial ring $\mathbb{R}[X]$
which is defined inductively by
\begin{gather*}
\Delta(f(X);u)=f(X+u)-f(X),\\
\Delta(f(X);u_1,\dots,u_k,u_{k+1})=
\Delta(\Delta(f(X);u_1,\dots,u_k);u_{k+1})
\end{gather*}
for integers $u,u_1,\dots,u_k$ and a polynomial $f(X)\in\mathbb{R}[X]$.
We also use the operator $\nabla(\ast;u_1,\dots,u_k)$
on the ring of real-valued arithmetic functions which is defined inductively by
\[\nabla(g(n);u)=g(n+u)\cdot g(n),\]
\[\nabla(g(n);u_1,\dots,u_k,u_{k+1})=
\nabla(\nabla(g(n);u_1,\dots,u_k);u_{k+1})\]
for integers $u,u_1,\dots,u_k$
and a real-valued arithmetic function $g(n)$ defined on $\mathbb{Z}$.
Then the Weyl differencing is the following.
\begin{lem}[Weyl differencing]
Let $k\ge2$ be an integer, $X\ge1$, $K=2^{k-1}$,
and $I$ be an interval of length $\le X$.
Then we have
\begin{gather*}
\left|
\sum_{m\in I}
g(m)e(f(m))
\right|^K\\
\ll
X^{K-k}\msum_{|u_1|,\dots,|u_{k-1}|\le X}
\sum_{m:(\ast)}
\nabla(g(m);u_1,\dots,u_{k-1})
e\left(\Delta(f(m);u_1,\dots,u_{k-1})\right)
\end{gather*}
where the condition $(\ast)$ on the summation variable $m$ is given by
\begin{gather*}
(\ast):\ \ 
\forall\,\mathcal{U}\subset\{u_1,\dots,u_{k-1}\},\ \ 
m+\sum_{u\in\mathcal{U}}u\in I.
\end{gather*}
\end{lem}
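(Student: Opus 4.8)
The statement to prove is the Weyl differencing lemma. The plan is to proceed by induction on $k$, peeling off one differencing step at a time.

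\medskip

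\noindent\textbf{Base case.}
For $k=2$ we have $K=2$, and the inequality reads
\[
\left|\sum_{m\in I}g(m)e(f(m))\right|^2
\ll
X\sum_{|u_1|\le X}\sum_{m:(\ast)}g(m+u_1)g(m)\,e\bigl(f(m+u_1)-f(m)\bigr).
\]
I would prove this directly: expand the square as a double sum over $m_1,m_2\in I$, substitute $m=m_2$, $u_1=m_1-m_2$, so that $|u_1|\le X$ since $I$ has length $\le X$; the condition $(\ast)$ for a singleton subset just records that both $m$ and $m+u_1$ lie in $I$. The factor $X^{K-k}=X^0=1$ here, and the crude bound $|\sum_m|^2 = \sum_{m_1,m_2}$ already gives the clean identity (indeed with an implied constant $1$, so ``$\ll$'' is wasteful but harmless).

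\medskip

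\noindent\textbf{Inductive step.}
Assume the statement for some $k\ge2$ with $K=2^{k-1}$; I want it for $k+1$ with $2K = 2^{k}$. Start from the $k$-variable bound, raise both sides to the power $2$, and apply Cauchy--Schwarz to the outer sum over $u_1,\dots,u_{k-1}$ (there are $\ll X^{k-1}$ tuples): this produces $X^{k-1}$ times a sum over the $u_i$ of the square of the inner $m$-sum. On that inner square one applies the base-case identity once more in the variable $m$, introducing a new differencing variable $u_k$ with $|u_k|\le X$. Tracking the exponents: $(X^{K-k})^2\cdot X^{k-1}\cdot X = X^{2K-2k+k} = X^{2K-(k+1)}$, which is exactly $X^{2K-(k+1)}$ as required for exponent $k+1$ with total power $2K$. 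The differencing operators compose correctly because, by definition, $\Delta(f;u_1,\dots,u_{k-1},u_k)=\Delta(\Delta(f;u_1,\dots,u_{k-1});u_k)$ and likewise for $\nabla$; and the support condition $(\ast)$ for $k+1$ variables is the conjunction of the old $(\ast)$ at shifted base points $m$ and $m+u_k$, which is precisely ``$m+\sum_{u\in\mathcal U}u\in I$ for every $\mathcal U\subset\{u_1,\dots,u_k\}$''.

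\medskip

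\noindent\textbf{Main obstacle.}
The analytically trivial point is the Weyl step itself; the delicate bookkeeping is (i) verifying that after Cauchy--Schwarz and the inner squaring the difference operator on $f$ is genuinely $\Delta(f;u_1,\dots,u_k)$ rather than some permuted or partially-applied version --- this is where one uses that forward differencing is symmetric in the $u_i$ and associative in the sense of the inductive definition --- and (ii) checking that the running range $|u_i|\le X$ is preserved (each new $u_i$ is a difference of two elements of $I$, and $|I|\le X$) and that the condition $(\ast)$ accumulates correctly so that all the partial sums $m+\sum_{u\in\mathcal U}u$ stay inside $I$ where $g$ and $f$ are actually being evaluated. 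Once the exponent arithmetic $2K-2k+k = 2K-(k+1)$ is confirmed, the induction closes and we obtain the lemma for all $k\ge2$.
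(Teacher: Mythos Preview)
The paper states this lemma without proof, treating it as the classical Weyl--van der Corput inequality; there is no argument in the paper to compare yours against. Your inductive strategy is the standard one and is correct in outline, and your bookkeeping for $\nabla$, $\Delta$, and the accumulation of the condition $(\ast)$ is accurate.

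There is, however, an internal inconsistency in your exponent count. In the base case $k=2$ you correctly note that $X^{K-k}=X^{0}=1$, yet the displayed inequality you write carries a leading factor $X$. That stray $X$ then reappears in the induction step as the final ``$\cdot X$'' in $(X^{K-k})^2\cdot X^{k-1}\cdot X$, producing exponent $2(K-k)+(k-1)+1=2K-k$, which you then assert equals $2K-(k+1)$; it does not. The fix is simply to drop the spurious $X$: squaring the inner $m$-sum is an exact identity with no loss, and Cauchy--Schwarz over the $(k-1)$-tuple $(u_1,\dots,u_{k-1})$ costs a factor $X^{k-1}$, giving
\[
(X^{K-k})^{2}\cdot X^{k-1}=X^{2K-k-1}=X^{2K-(k+1)},
\]
which is exactly what is needed for the step from $k$ to $k+1$. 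With that correction the induction closes.
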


By using the Weyl differencing, we have
\begin{lem}
\label{Phi_Weyl}
Let $K=2^{k-1}$ and suppose $M^{1/k-1}\le H$.
Then we have
\begin{align*}
\Phi(\alpha)^{K/2}\ll
H^{K/2-1}M^{K/k-K/2}
\Theta(\alpha)
+H^{K/2}M^{K/k-K/2-1/k}+M^{K/2k},
\end{align*}
where the trigonometric polynomial $\Theta(\alpha)$ is given by
\[\Theta(\alpha)=\sum_{1\le|h|\ll H}c(h)e(h\alpha),\]
and its coefficients satisfy $c(h)\ll\tau_k(|h|)$ for all $h\neq0$.
\end{lem}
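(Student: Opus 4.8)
The plan is to exploit the Fej\'er-kernel representation obtained in the proof of Lemma~\ref{Phi_positive}: writing $T(\beta):=\sum_{M<m^k\le M'}e(m^k\beta)$ one has
\[
\Phi(\alpha)=\int_{-1/2}^{1/2}|T(\alpha+\eta)|^2F(\eta)\,d\eta,
\qquad
\int_{-1/2}^{1/2}e(\eta t)F(\eta)\,d\eta=w(t)\quad(t\in\mathbb{Z}).
\]
First I would peel off the diagonal $m_1=m_2$ from $|T(\beta)|^2$, writing $|T(\beta)|^2=N_0+G(\beta)$ with $N_0:=\#\{m:M<m^k\le M'\}\ll M^{1/k}$ and $G$ the off-diagonal part. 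This gives $\Phi(\alpha)=N_0+\Psi(\alpha)$, $\Psi(\alpha):=\int_{-1/2}^{1/2}G(\alpha+\eta)F(\eta)\,d\eta$, and since $\Phi(\alpha)\ge0$ by Lemma~\ref{Phi_positive}, the inequality $|a+b|^{K/2}\ll|a|^{K/2}+|b|^{K/2}$ yields $\Phi(\alpha)^{K/2}\ll M^{K/2k}+|\Psi(\alpha)|^{K/2}$, which already produces the last term.

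For $|\Psi(\alpha)|^{K/2}$, pairing $m_1=m+u$, $m_2=m$ gives
\[
\Psi(\alpha)=\sum_{0<|u|<|I'|}\sum_m w\bigl((m+u)^k-m^k\bigr)e\bigl(\alpha((m+u)^k-m^k)\bigr),
\]
with $I'=(M^{1/k},M'^{1/k}]$ and the inner sum over an interval of length $\ll M^{1/k}$. Since $w\ge0$ is supported on $[-2H,2H]$ and $(m+u)^k-m^k\asymp|u|M^{1-1/k}$, the inner sum vanishes unless $|u|\ll HM^{1/k-1}$, so (also using $|u|<|I'|\ll M^{1/k}$) at most $\ll HM^{1/k-1}$ values of $u$ contribute. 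By H\"older over these $u$, with $U:=CHM^{1/k-1}$,
\[
|\Psi(\alpha)|^{K/2}\ll U^{K/2-1}\sum_{0<|u|\le U}\Bigl|\sum_m w\bigl((m+u)^k-m^k\bigr)e\bigl(\alpha((m+u)^k-m^k)\bigr)\Bigr|^{K/2}.
\]
For fixed $u$ the inner sum is a weighted exponential sum in $m$ over a polynomial of degree $k-1$; when $k\ge3$ I would apply the Weyl differencing lemma with $k-2$ differencing variables $v_1,\dots,v_{k-2}$ and exponent $2^{k-2}=K/2$ (the weight being real and nonnegative, so the differenced weight $\nabla(\,\cdot\,;v_1,\dots,v_{k-2})$ lies in $[0,1]$ and the differencing sum is real and nonnegative). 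The case $k=2$ is the degenerate one, where the inner polynomial is already linear and $\Psi$ is treated directly.

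The differenced polynomial is $\Delta(m^k;u,v_1,\dots,v_{k-2})$, for which I would first record the explicit identity
\[
\Delta(X^k;u_1,\dots,u_{k-1})=\tfrac{k!}{2}\,u_1\cdots u_{k-1}\bigl(2X+u_1+\dots+u_{k-1}\bigr),
\]
which holds because, after division by $u_1\cdots u_{k-1}$, this $(k-1)$-fold difference is homogeneous of degree $1$ in $(X,u_1,\dots,u_{k-1})$, symmetric in the $u_i$, and odd about $X=-\tfrac12\sum u_i$ (the substitution $X\mapsto-X-\sum u_i$ carries the difference to its negative). Thus, after differencing, the phase is $\alpha h$ with $h=\tfrac{k!}{2}uv_1\cdots v_{k-2}w$, where $w:=2m+u+v_1+\dots+v_{k-2}$ satisfies $w>2M^{1/k}>0$ by the admissibility condition $(\ast)$ (all of $m$, $m+u$ and the shifted arguments lie in $I'$). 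I would then reorganise the resulting multiple sum over $(u,v_1,\dots,v_{k-2},m)$ according to the value of $h$. If $h\neq0$, all of $u,v_1,\dots,v_{k-2},w$ are nonzero, so $(u,v_1,\dots,v_{k-2},w)$ is one of $\ll\tau_k(|h|)$ ordered $k$-fold factorisations of $2h/k!$ and $m$ is then determined, while $|h|=\tfrac{k!}{2}|u|\,|v_1\cdots v_{k-2}|\,|w|\ll U\cdot M^{(k-1)/k}\ll H$; since a direct computation gives $U^{K/2-1}(M^{1/k})^{K/2-(k-1)}\asymp H^{K/2-1}M^{K/k-K/2}$ (the $M$-exponent is $(\tfrac1k-1)(\tfrac K2-1)+\tfrac1k(\tfrac K2-k+1)=\tfrac Kk-\tfrac K2$), this part is $H^{K/2-1}M^{K/k-K/2}\Theta(\alpha)$ with $\Theta(\alpha)=\sum_{1\le|h|\ll H}c(h)e(h\alpha)$ and $0\le c(h)\ll\tau_k(|h|)$, as required. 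If $h=0$, then since $u\neq0$ and $w>0$ some $v_i$ must vanish, and the number of such tuples is $\ll U\cdot(M^{1/k})^{k-3}\cdot M^{1/k}\ll HM^{-1/k}$, so this part contributes $\ll H^{K/2-1}M^{K/k-K/2}\cdot HM^{-1/k}=H^{K/2}M^{K/k-K/2-1/k}$, the second term.

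The main obstacle is the bookkeeping in the Weyl differencing: one must check that the differenced weight $\nabla(\,\cdot\,;v_1,\dots,v_{k-2})$ is bounded by $1$ and still vanishes unless $|(m+u)^k-m^k|\le2H$ (so that $u$ is genuinely confined to $|u|\le U$), and one must verify that the diagonal pieces — the $m_1=m_2$ diagonal peeled off at the start and the $h=0$ terms, where some $v_i$ vanishes — collapse to precisely the sizes $M^{K/2k}$ and $H^{K/2}M^{K/k-K/2-1/k}$. This is exactly where the hypothesis $M^{1-1/k}\le H$ is used, via the identity for the prefactor above. The remaining steps — bounding ordered factorisations by $\tau_k$ and controlling the incomplete sums over $m$ — are routine.
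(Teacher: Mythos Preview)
Your proposal is correct and follows essentially the same argument as the paper: remove the diagonal $m_1=m_2$ (giving the $M^{K/2k}$ term), write the off-diagonal part as a sum over the first difference $u$ (the paper's $d$), apply H\"older over $u$, then Weyl-difference the inner sum $k-2$ times and regroup according to the value $h=\tfrac{k!}{2}uv_1\cdots v_{k-2}(2m+u+\sum v_i)$ of the differenced phase, bounding the multiplicity by $\tau_k(|h|)$ and treating the $h=0$ case via vanishing of some $v_i$. The only differences from the paper are notational (your $u,v_i$ for the paper's $d,u_i$) and expository (you frame the diagonal removal through the Fej\'er-kernel integral and spell out the positivity of $w=2m+u+\sum v_i$, while the paper bounds $|h|\ll H$ directly from the support condition $|\Delta(m^k;d)|\le 2H$ in $(\ast1)$).
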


\begin{proof}
We consider two cases $k=2$ and $k\ge3$ separately.
For the case $k=2$,
\begin{align*}
\Phi(\alpha)
=&\ \dsum_{\substack{M<m_1^2,m_2^2\le M'\\m_1\neq m_2}}
w(m_1^2-m_2^2)e((m_1^2-m_2^2)\alpha)+O(M^{1/k})\\
=&\ 
\sum_{1\le|h|\ll H}c(h)e(h\alpha)+O(M^{1/k}),
\end{align*}
where
\[
c(h)
=
\dsum_{\substack{M<m_1^2,m_2^2\le M'\\m_1^2-m_2^2=h}}w(m_1^2-m_2^2)
\ll
\dsum_{(m_1-m_2)(m_2+m_2)=h}1
\ll
\tau_2(|h|)
\]
for all $h\neq0$.
This completes the proof for the case $k=2$.

For the case $k\ge3$,
we have
\begin{align*}
\Phi(\alpha)
=&\dsum_{\substack{M<m_1^k,m_2^k\le M'\\m_1\neq m_2}}
w(m_1^k-m_2^k)e((m_1^k-m_2^k)\alpha)+O(M^{1/k})\\
=&\ 
2\Re\dsum_{\substack{M<m_1^k,m_2^k\le M'\\m_1>m_2}}
w(m_1^k-m_2^k)e((m_1^k-m_2^k)\alpha)+O(M^{1/k}).
\end{align*}
By (\ref{support_w}),
we can rewrite the last expression as
\[
\ll
\left|
\sum_{d\le HM^{1/k-1}}
\sum_{\substack{M<m^k,(m+d)^k\le M'\\\Delta(m^k;d)\le2H}}
w\left(\Delta(m^k;d)\right)
e\left(\Delta(m^k;d)\alpha\right)
\right|+M^{1/k}.
\]
Applying H\"older's inequality,
we have
\begin{equation}
\label{Phi0_intro}
\Phi(\alpha)^{K/2}
\ll(HM^{1/k-1})^{K/2-1}\Phi_0(\alpha)+M^{K/2k},
\end{equation}
where
\[
\Phi_0(\alpha)=
\sum_{d\le HM^{1/k-1}}
\left|
\sum_{\substack{M<m^k,(m+d)^k\le M'\\\Delta(m^k;d)\le2H}}
w\left(\Delta(m^k;d)\right)
e\left(\Delta(m^k;d)\alpha\right)
\right|^{K/2}.
\]
We carry out the Weyl differencing here
and obtain
\begin{align*}
\Phi_0(\alpha)
\ll&\ 
M^{K/2k-(k-1)/k}
\sum_{d\le HM^{1/k-1}}
\msum_{|u_1|,\dots,|u_{k-2}|\le M^{1/k}}
\Phi_1(d,u_1,\dots,u_{k-2})\\
\ll&\ 
M^{K/2k+(1/k-1)}
\sum_{d\le HM^{1/k-1}}
\msum_{1\le |u_1|,\dots,|u_{k-2}|\le M^{1/k}}
\Phi_1(d,u_1,\dots,u_{k-2})\\
&\hspace{6cm}
+M^{K/2k+(1/k-1)}\cdot HM^{-1/k},
\end{align*}
where
\begin{gather*}
\Phi_1(d,u_1,\dots,u_{k-2})
=
\sum_{m:(\ast1)}
g(m;d,u_1,\dots,u_{k-2})
e\left(\Delta(m^k;d,u_1,\dots,u_{k-2})\alpha\right),\\
g(m;d,u_1,\dots,u_{k-2})
=
\nabla\left(w(\Delta(m^k;d));u_1,\dots,u_{k-2}\right),
\end{gather*}
and the summation condition $(\ast1)$ is given by
\begin{gather*}
(\ast1):\ \ 
\forall\,\mathcal{U}\subset\{u_1,\dots,u_{k-2}\},\ \ 
\left\{
\substack{
\displaystyle
M<
\left(m+\sum_{u\in\mathcal{U}}u\right)^k
\le M'\\
\displaystyle
M<
\left(m+d+\sum_{u\in\mathcal{U}}u\right)^k
\le M'\\[2mm]
\displaystyle
\Delta((m+\sum_{u\in\mathcal{U}}u)^k;d)\le2H}
\right\}.
\end{gather*}
We group the terms according to the values
\[\Delta(m^k;d,u_1,\dots,u_{k-2}).\]
In order to do this,
we observe
\begin{gather*}
\Delta(m^k;d,u_1,\dots,u_{k-2})
=\frac{k!}{2}du_1\cdots u_{k-2}
\left(2m+d+u_1+\cdots+u_{k-2}\right),\\
\left|\Delta(m^k;d,u_1,\dots,u_{k-2})\right|
\ll
H,\quad
g(m;d,u_1,\dots,u_{k-2})\ll 1.
\end{gather*}
Hence for any nonnegative integer $h$
the equation
\[\Delta(m^k;d,u_1,\dots,u_{k-2})=h\]
has at most $\ll\tau_k(|h|)$ solutions $(m,d,u_1,\dots,u_{k-2})$.
Therefore we obtain
\[\Phi_0(\alpha)\ll 
M^{K/2k+(1/k-1)}
\sum_{1\le|h|\ll H}c(h)e(h\alpha)+HM^{K/2k-1},\]
where
\[
c(h):=
\msum_{\substack{
d\le HM^{1/k-1}\\
1\le |u_1|,\dots,|u_{k-2}|\le M^{1/k}\\[1mm]
m:(\ast1)\\
\Delta(m^k;d,u_1,\dots,u_{k-2})=h
}}
g(m;d,u_1,\dots,u_{k-2})
\ll
\tau_k(|h|).
\]
Substituting this expression into (\ref{Phi0_intro}),
we arrive at
\begin{equation*}
\begin{split}
\Phi(\alpha)^{K/2}\ll&\ 
H^{K/2-1}M^{K/k-K/2}
\sum_{1\le|h|\ll H}c(h)e(h\alpha)\\
&\hspace{3cm}
+H^{K/2}M^{K/k-K/2-1/k}+M^{K/2k}.
\end{split}
\end{equation*}
This completes the proof of the lemma.
\end{proof}

\section{The minor arcs}
Now we proceed to the estimate for the minor arcs.
We first subdivide the sum over prime powers dyadically:
\begin{equation}
\label{dyadic_minor}
\sum_\m=
\sum_{U<h\le U+H}\left|\int_\m
S_1(\alpha)\overline{S_k(\alpha)}e(-h\alpha)d\alpha\right|^2
\ll
L^2\sup_{\substack{XL^{-4kA}<M\le X\\M<M'\le2M}}\sum_{\m,M},
\end{equation}
where
\[\sum_{\m,M}=
\sum_{U<h\le U+H}\left|\int_\m
S_1(\alpha)\overline{S_k(\alpha,M)}e(-h\alpha)d\alpha\right|^2,\]
\[S_k(\alpha,M)=\sum_{M<m^k\le M'}\Lambda(m)e(m^k\alpha).\]
Next we introduce the weights $w(h)$ into the sum $\sum_{\m,M}$.
Then%
\footnote{
Recall that we assume that $U$ is a positive integer.
}
\begin{align*}
\sum_{\m,M}\ll&
\sum_{U<h\le U+H}\left|\int_\m
S_1(\alpha)\overline{S_k(\alpha,M)}e(-h\alpha)d\alpha\right|^2\\
\ll&
\sum_{|h|\le 2H}w(h)\left|\int_\m
S_1(\alpha)\overline{S_k(\alpha,M)}e(-(U+h)\alpha)d\alpha\right|^2.
\end{align*}
We expand the square and take summation over $h$.
Then we have
\[\sum_{\m,M}\ll\int_\m\int_\m
\left|S_1(\alpha)S_k(\alpha, M)S_1(\beta)S_k(\beta, M)\right|
F(\alpha-\beta)d\alpha d\beta.\]
By the inequality of the arithmetic and geometric means,
we have
\[
\left|S_1(\alpha)S_k(\alpha, M)S_1(\beta)S_k(\beta, M)\right|
\ll
\left|S_1(\alpha)S_k(\beta, M)\right|^2
+
\left|S_1(\beta)S_k(\alpha, M)\right|^2.\]
Therefore we have
\begin{align*}
\sum_{\m,M}
\ll&
\int_\m\int_\m
\left|S_1(\alpha)\right|^2\left|S_k(\beta, M)\right|^2
F(\alpha-\beta)d\alpha d\beta\\
\ll&
\int_{-1/2}^{1/2}\int_\m
\left|S_1(\alpha)\right|^2\left|S_k(\alpha+\beta, M)\right|^2
F(\beta)d\alpha d\beta.
\end{align*}
Now we expand the square
\[\left|S_k(\alpha+\beta,M)\right|^2,\]
and interchange the order of integration and summation.
Then we have
\begin{align*}
\sum_{\m,M}\ll&
\dsum_{M<m_1^k,m_2^k\le M'}\Lambda(m_1)\Lambda(m_2)\\
&\times
\int_{-1/2}^{1/2}\int_\m
\left|S_1(\alpha)\right|^2
F(\beta)
e((m_1^k-m_2^k)(\alpha+\beta))d\alpha d\beta\\
\ll&\ 
L^2
\dsum_{M<m_1^k,m_2^k\le M'}
\left|\int_{-1/2}^{1/2}
F(\beta)e((m_1^k-m_2^k)\beta)d\beta\right|\\
&\hspace{4.2cm}
\times\left|\int_\m
\left|S_1(\alpha)\right|^2
e((m_1^k-m_2^k)\alpha)d\alpha\right|\\
=&\ 
L^2
\dsum_{M<m_1^k,m_2^k\le M'}
w(m_1^k-m_2^k)
\left|\int_\m
\left|S_1(\alpha)\right|^2
e((m_1^k-m_2^k)\alpha)d\alpha\right|.
\end{align*}
By the Cauchy-Schwarz inequality, we have
\begin{equation}
\label{Phi_J}
\sum_{\m,M}
\ll
\Phi(0)^{1/2}J^{1/2}L^2,
\end{equation}
where
\[J=
\dsum_{M<m_1^k,m_2^k\le M'}
w(m_1^k-m_2^k)
\left|\int_\m
\left|S_1(\alpha)\right|^2
e((m_1^k-m_2^k)\alpha)d\alpha\right|^2.\]
We estimate this sum $J$.
Expanding the square and interchanging the order of summation and integration,
we have
\[J\ll
\int_\m\int_\m
\left|S_1(\alpha)\right|^2\left|S_1(\beta)\right|^2
\Phi(\alpha-\beta)d\alpha d\beta.\]
We apply H\"older's inequality and obtain
\begin{equation}
\label{J_intro}
J\ll
\left(
\int_\m\left|S_1(\alpha)\right|^2d\alpha
\right)^{2(K-2)/K}
J_0^{2/K}
\ll (XL)^{2(K-2)/K}J_0^{2/K},
\end{equation}
where
\begin{equation}
\label{J0_definition}
J_0=
\int_\m\int_\m
\left|S_1(\alpha)\right|^2\left|S_1(\beta)\right|^2
\Phi(\alpha-\beta)^{K/2}d\alpha d\beta.
\end{equation}
Now we substitute Lemma \ref{Phi_Weyl} into (\ref{J0_definition}).
Then we find that
\begin{align*}
J_0\ll&\ 
H^{K/2-1}M^{K/k-K/2}
\int_\m\int_\m
\left|S_1(\alpha)\right|^2\left|S_1(\beta)\right|^2
\sum_{1\le|h|\ll H}c(h)e(h(\alpha-\beta))d\alpha d\beta\\
&\hspace{0.5cm}+\left(H^{K/2}M^{K/k-K/2-1/k}+M^{K/2k}\right)
\left(\int_0^1\left|S_1(\alpha)\right|^2d\alpha\right)^2\\
\ll&\ 
H^{K/2-1}M^{K/k-K/2}
\sum_{1\le |h|\ll H}c(h)
\left|\int_\m\left|S_1(\alpha)\right|^2e(h\alpha)d\alpha\right|^2\\
&\hspace{0.5cm}+\left(H^{K/2}M^{K/k-K/2-1/k}+M^{K/2k}\right)(XL)^2.
\end{align*}
By Theorem \ref{90_moment},
we have
\begin{gather*}
\sum_{1\le |h|\ll H}c(h)
\left|\int_\m\left|S_1(\alpha)\right|^2e(h\alpha)d\alpha\right|^2\\
\ll
\left(\sum_{1\le h\ll H}\tau_k(h)^2
\left(\int_\m\left|S_1(\alpha)\right|^2d\alpha\right)^2\right)^{1/2}
\left(\sum_{1\le h\ll H}
\left|\int_\m\left|S_1(\alpha)\right|^2e(h\alpha)d\alpha\right|^2\right)^{1/2}\\
\ll\left(HX^2L^{2k^2}\right)^{1/2}\Big(HX^2L^{-32kKA}\Big)^{1/2}
\ll HX^2L^{-15kKA}.
\end{gather*}
Therefore
we obtain
\[
J_0
\ll
H^{K/2}X^2M^{K/k-K/2}L^{-15kKA}+M^{K/2k}(XL)^2.
\]
We substitute this estimate into (\ref{J_intro}).
Then
\begin{equation}
\label{J_estimate}
J\ll
HX^2M^{-1+2/k}L^{-28kA}
+M^{1/k}(XL)^2.
\end{equation}
We combine (\ref{Phi_J}), (\ref{J_estimate}), and Lemma \ref{Phi_at_0_estimate}.
Then we arrive at
\[
\sum_{\m,M}
\ll
\left(HM^{2/k-1}J\right)^{1/2}L^2
\ll
HXM^{-1+2/k}L^{-12kA}
+\,H^{1/2}XM^{3/2k-1/2}L^3.
\]
Since the assumptions
\[X^{1-1/k+\epsilon}\le H\le X,\quad XL^{-4kA}<M\le X\]
imply
\[H^{1/2}XM^{3/2k-1/2}L^3
\ll HX^{2/k}L^{-12kA},\]
we have
\[\sum_{\m,M}\ll HX^{2/k}L^{-12kA}.\]
Substituting this estimate into (\ref{dyadic_minor}),
we arrive at
\[\sum_\m\ll HX^{2/k}L^{-4A}\]
as desired.
This completes the proof of Theorem \ref{main_mean}.

\section{Completion of the proof}
We need to approximate the truncated series $\mathfrak{S}_k^\ast(h,P)$
by the full series $\mathfrak{S}_k^\ast(h)$.
This task turns out to be difficult.
Fortunately, Kawada \cite{Kawada} had already
developed the techniques on the completion of the singular series.
We just refer a variant of Kawada's result.
\begin{lem}
\label{completion_singular}
Assume $X^{\epsilon}\le H\le X$.
Then we have
\[
\mathfrak{S}_k^\ast(h,P)
=
\mathfrak{S}_k^\ast(h)+O\left(L^{-A}\right)
\]
for all but $\ll HL^{-A}$ integers $h\in[1,H]\cap\mathbb{H}_k$.
\end{lem}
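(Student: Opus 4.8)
The plan is to derive the lemma from a mean-value estimate and then to appeal to (a variant of) Kawada's treatment of the singular series for the decisive tail bound. First I would reduce to proving
\[
\sum_{h\in[1,H]\cap\mathbb{H}_k}\bigl|\mathfrak{S}_k^\ast(h)-\mathfrak{S}_k^\ast(h,P)\bigr|\ll HL^{-2A},
\]
which is legitimate since $\mathfrak{S}_k^\ast(h)$ converges for $h\in\mathbf{Irr}_k$, and which by Markov's inequality gives $|\mathfrak{S}_k^\ast(h)-\mathfrak{S}_k^\ast(h,P)|>L^{-A}$ for at most $\ll HL^{-A}$ of the relevant $h$.

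By Lemmas \ref{multiplicative_A} and \ref{explicit_A} the function $q\mapsto\mu(q)A_k(h,q)/\phi(q)^2$ is multiplicative, with local factor $1-\frac{pw_k(h,p)-\phi(p)}{\phi(p)^2}=1-\frac{w_k(h,p)-1}{p}+O_k(p^{-2})$, so $\mathfrak{S}_k^\ast(h)$ is the Euler product of these factors and $\mathfrak{S}_k^\ast(h,P)$ the corresponding partial sum. Comparing the partial sum with the partial product $\prod_{p\le P}\bigl(1-\frac{pw_k(h,p)-\phi(p)}{\phi(p)^2}\bigr)$, I would split the difference $\mathfrak{S}_k^\ast(h)-\mathfrak{S}_k^\ast(h,P)$ into (i) the $P$-smooth tail $\sum_{q>P,\ p\mid q\Rightarrow p\le P}\mu(q)A_k(h,q)/\phi(q)^2$ of the Dirichlet sum, and (ii) the Euler-product tail $\prod_{p>P}(1+\mu(p)A_k(h,p)/\phi(p)^2)-1$, which, up to a factor $\prod_{p\le P}(1+O_k(1/p))\ll(\log\log X)^{O_k(1)}$, is $\ll\bigl|\sum_{p>P}\frac{w_k(h,p)-1}{p}\bigr|$ whenever the latter does not exceed $1$.

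For the average over $h$ of (i) and (ii) the main tools would be: the orthogonality built into $A_k(h,q)=\psum_{a\ppmod{q}}\overline{C_k(a,q)}e(-ah/q)$, which forces $\sum_{h\bmod[q_1,q_2]}A_k(h,q_1)A_k(h,q_2)$ to vanish for distinct squarefree $q_1,q_2$ while bounding the diagonal via the Weil estimate $C_k(a,q)\ll\tau_k(q)q^{1/2}$ for $(a,q)=1$ (see \cite{Bauer_thesis}); the elementary identities $\sum_{h\bmod p}(w_k(h,p)-1)=-1$ and $\sum_{h\bmod p_1p_2}(w_k(h,p_1)-1)(w_k(h,p_2)-1)=1$ for $p_1\ne p_2$; and the P\'olya--Vinogradov inequality for $k$-th power residue characters. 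Second-moment arguments based on these control the contribution of moduli $q,p\le H^{1-\epsilon}$, while the contribution of very large moduli is governed by the effective prime ideal / Chebotarev density theorem for the splitting field of $x^k+h$, whose conductor is $\ll_k h^{O_k(1)}\le X^{O_k(1)}$, after the $\ll HL^{-A}$ values of $h$ with an associated Siegel zero are removed.

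The hard part is stitching these two regimes together. In the intermediate range of moduli, roughly between $H$ and $\exp(c_k(\log X)^2)$, the character-sum bounds become too weak (their off-diagonal contributions grow like a power of the upper cut-off), while the effective Chebotarev theorem does not yet yield a saving of a power of $\log X$ (its error term is dominated by the conductor until the variable exceeds $\exp(C_k(\log X)^2)$). Navigating this range, together with the precise accounting of the Siegel-zero exceptional set, is exactly what makes completing the singular series delicate; since this has been carried out by Kawada \cite{Kawada}, the proof will consist in reducing to, and then quoting, the appropriate variant of his result.
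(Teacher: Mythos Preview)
Your proposal and the paper's proof are aligned: the paper's entire proof is the single sentence ``This can be proven by Kawada's method \cite[Corollary 1]{Kawada},'' and you likewise conclude by reducing to and quoting Kawada. Your preamble sketching the mean-value reduction, the multiplicative structure of $A_k(h,q)$, and the small/large-modulus dichotomy (character sums versus effective Chebotarev via zero-density for the Dedekind zeta of $\mathbb{Q}(\sqrt[k]{-h})$) is a reasonable outline of what Kawada's argument supplies, though some of the intermediate claims (e.g.\ the precise orthogonality statement for $\sum_{h\bmod[q_1,q_2]}A_k(h,q_1)A_k(h,q_2)$, and absolute convergence of $\mathfrak{S}_k^\ast(h)$ for $h\in\mathbf{Irr}_k$ without the prime-ideal input) would need tightening if you were writing this out in full rather than citing it.
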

\begin{proof}
This can be proven by Kawada's method \cite[Corollary 1]{Kawada}.
\end{proof}

\begin{rem}
Since Lemma \ref{completion_singular} is not the short interval version,
i.e. our range of $h$ is not $[X,X+H]$ but $[1,H]$,
there is no need to assume $X^{1/2+\epsilon}\le H$.
Our assumption $X^\epsilon\le H\le X$ just assures $\log X\asymp \log H$.
\end{rem}

We can now prove Theorem \ref{main_thm}.
By Theorem \ref{main_mean} with $U=0$,
we have
\begin{gather*}
\#\Set{h\in[1,H]\cap\mathbb{H}_k|
\left|\Psi_k^\ast(X,h)-\mathfrak{S}_k^\ast(h,P)X^{1/k}\right|>X^{1/k}L^{-A}}\\
\ll \frac{HX^{2/k}L^{-4A}}{X^{2/k}L^{-2A}}\ll HL^{-A}.
\end{gather*}
Therefore we have
\[
\Psi_k^\ast(X,h)
=
\mathfrak{S}_k^\ast(h,P)X^{1/k}+O\left(X^{1/k}L^{-A}\right)
\]
for all but $\ll HL^{-A}$ integers $h\in[1,H]\cap\mathbb{H}_k$.
Now Lemma \ref{completion_singular} implies that
\[
\Psi_k^\ast(X,h)
=
\mathfrak{S}_k^\ast(h)X^{1/k}+O\left(X^{1/k}L^{-A}\right)
\]
with $\ll HL^{-A}$ additional exceptions.
This completes the proof of Theorem \ref{main_thm}.

\section{Some remarks}
We give two remarks in order to compare the method of Bauer \cite{Bauer} and ours.

\begin{rem}
\label{disadvantage}
We first recall that for the conjugate equation (\ref{conjugate_Hua}),
we can use the restricted counting function
\[
\tilde{R}_k(N)=
\sum_{\substack{m+n^k=N\\X-Y<m\le X\\Y/2^k<n^k\le Y/2^k+Y}}
\Lambda(m)\Lambda(n)
\]
instead of the full counting function
\[
R_k(N)=
\sum_{m+n^k=N}\Lambda(m)\Lambda(n),
\]
where $Y$ is some parameter smaller than $X$.
By using the prime number theorem in short intervals,
we can obtain some result for $\tilde{R}_k(N)$ even better than for $R_k(N)$
if $Y$ is substantially smaller than $X$.
In the paper \cite{Bauer},
Bauer stated his result with $\tilde{R}_k(N)$ and he obtained the admissible range
\[
Y^{1-\frac{1}{2k}+\epsilon}\le H\le Y,\quad
X^{\frac{7}{12}+\epsilon}\le Y\le X.
\]
Unfortunately,
it seems impossible to combine our method with this restriction trick.
Thus our Theorem \ref{main_thm} is an improvement
only for the full counting function $R_k(N)$.
This is a disadvantage of our method comparing with the method of Bauer.
On the other hand, note that the information of $R_k(N)$
cannot be restored from that of $\tilde{R}_k(N)$
if $Y$ is of the size $o(X)$.
\end{rem}

\begin{rem}
\label{restriction_fails}
As for the equation (\ref{Hua_eq}) with $h$ in some neighborhood of $h=2$,
the restriction trick in Remark \ref{disadvantage} does not work well.
Since if we introduce the restriction $n^k\le Y$
to the sum $\Psi_k^\ast(X,h)$, then the resulting sum is
\[
\sum_{\substack{n^k\le X\\n^k\le Y}}\Lambda(n)\Lambda(n^k+h)
=
\sum_{n^k\le Y}\Lambda(n)\Lambda(n^k+h)
=
\Psi_k^\ast(Y,h)
\]
so that the restriction trick just replace the main variable $X$ by $Y$,
which violates the desired situation ``the larger $X$ with the smaller $h$''.
However, as Perelli and Pintz \cite[Theorem 3]{PP_Goldbach} mentioned,
we can obtain the result for the counting function $\Psi_k^\ast(Y,h)$
with $h\in[X,X+H]$.
This result is rather motivated by the problem asking the expression
\[
N=p'-p^k
\]
of a given integer $N$, which has slightly different interest from our problem asking
the distribution of prime vs. prime power pairs.
\end{rem}

\begin{center}
\textbf{Acknowledgements.}
\end{center}

The author would like to thank
Prof. Kohji Matsumoto,
Prof. Hiroshi Mikawa,
Prof. Koichi Kawada,
and
Prof. Alberto Perelli
for their invaluable comments and suggestions.
This work was supported by Grant-in-Aid for JSPS Research Fellow
(Grant Number: JP16J00906).

\vspace{3mm}

\begin{flushleft}
{\footnotesize
{\sc
Graduate School of Mathematics, Nagoya University,\\
Chikusa-ku, Nagoya 464-8602, Japan.
}

{\it E-mail address}: {\tt m14021y@math.nagoya-u.ac.jp}
}
\end{flushleft}

\end{document}